\theoremstyle{plain}
\newtheorem{thm}{Theorem}[section]
\newtheorem{pro}[thm]{Problem}
\newtheorem{lem}[thm]{Lemma}
\theoremstyle{definition}
\newtheorem{defn}[thm]{Definition}
\newtheorem{ass}{Assumption}[section]
\newtheorem{rmk}[thm]{Remark}
\newcommand{\midsmall}{\fontsize{11pt}{13pt}\selectfont}
\renewcommand{\theequation}{\thesection.\arabic{equation}}
\makeatletter\@addtoreset{equation}{section} \makeatother
\begin{document}

	\title{Discrete-Time LQ Stochastic Two-Person Non-zero-Sum Difference Games with Random Coefficients:~Open-Loop Nash Equilibrium
		\thanks{Qingxin Meng was supported by the Key Projects of Natural Science Foundation of Zhejiang Province (No. LZ22A010005) and the National
			Natural Science Foundation of China ( No.12271158).   Xun Li is supported by RGC of Hong Kong grants 15221621, 15226922 and 15225124, and partially from PolyU 1-ZVXA.}}
	
	\date{}
	
	\author[a]{Yiwei Wu}
	
	\affil[a]{\small{School of Mathematical Sciences, South China Normal University, Guangzhou 510631, China}}
	\author[b]{Xun Li}
	\author[c]{Qingxin Meng\footnote{Corresponding author.
			\authorcr
			\indent E-mail address: elorywoo@gmail.com (Y.Wu), li.xun@polyu.edu.hk (X. Li), mqx@zjhu.edu.cn (Q.Meng)}}
	
	\affil[b]{\small{Department of Applied Mathematics, The Hong Kong Polytechnic University, Hong Kong, PR  China}}
	\affil[c]{\small{Department of Mathematics, Huzhou University, Zhejiang 313000,PR  China}}
\maketitle

\begin{abstract}
This paper presents a pioneering investigation into discrete-time two-person non-zero-sum linear quadratic (LQ) stochastic games with random coefficients. We derive necessary and sufficient conditions for the existence of open-loop Nash equilibria using convex variational calculus. To obtain explicit expressions for the Nash equilibria, we introduce fully coupled forward-backward stochastic difference equations (FBS$\Delta$E, for short), which provide a dual characterization of these Nash equilibria. Additionally, we develop non-symmetric stochastic Riccati equations that decouple the stochastic Hamiltonian system for each player, enabling the derivation of closed-loop feedback forms for open-loop Nash equilibrium strategies. A notable aspect of this research is the complete randomness of the coefficients, which results in the corresponding Riccati equations becoming fully nonlinear higher-order backward stochastic difference equations. It distinguishes our non-zero-sum difference game from the deterministic case, where the Riccati equations reduce to algebraic forms.
\end{abstract}
	
	\textbf{Keywords}: Open-loop Nash equilibria; Non-zero-sum difference game; Random coefficients; Riccati equation; Closed-loop feedback forms

\section{Introduction}

Since the 1950s, game theory has found wide applications in economics, control theory, and engineering. Among its many models, LQ games stand out for their analytical tractability and wide applicability. A key problem in game theory is the characterization of Nash equilibria, where each player seeks to optimize their cost functional under fixed strategies of others. In two-player zero-sum games, one player's gain is the other's loss, while non-zero-sum games introduce more complex dynamics, as players' outcomes are not strictly opposed. Such settings reflect realistic scenarios such as resource allocation and industrial competition, where cooperation and conflict may coexist. Non-zero-sum games also often yield multiple, possibly non-unique Nash equilibria, adding to the complexity.

LQ game problems are typically studied under continuous-time or discrete-time frameworks. In the continuous-time case, Starr and Ho \cite{1} first introduced non-zero-sum differential games. Subsequently, numerous works have addressed LQ games with deterministic dynamics \cite{2,3,22,23}.~Huang et al. \cite{8} developed novel methods for mean-field games with partially observable forward-backward stochastic systems, advancing techniques to handle common noise in such frameworks.~Mou and Yong \cite{4} used Hilbert space methods to solve two-player zero-sum stochastic differential games, while Sun and Yong \cite{6} derived Riccati-based solutions for non-zero-sum settings, focusing on open-loop and closed-loop Nash equilibria. Hamadene \cite{16,17} further established connections between stochastic LQ games and forward-backward stochastic differential equations (FBSDE, for short), while Hamadene and Mu \cite{18} addressed the existence of Nash equilibria with unbounded coefficients. For a broader overview, we refer to \cite{1,2,3,7,9,10,12,13,14,HH,24,25,26}.

Discrete-time LQ games are equally important due to their practical relevance.~Zhang and Liu \cite{27} proposed a dynamic compensation method for continuous descriptor systems, providing theoretical foundations for handling parameter uncertainties.~Gao and Lin \cite{9} addressed two-player discrete-time non-zero-sum LQ games with deterministic coefficients, establishing the existence of open-loop and closed-loop equilibria. However, the structure of their Riccati equations is fundamentally different from ours, as their setting excludes the randomness in coefficients.

The treatment approaches of stochastic maximum principles (SMP) \cite{21,32,33} for fully coupled forward-backward stochastic systems with correlated noise have a wide range of applications. These developments provide crucial theoretical tools for handling control variables that simultaneously affect both diffusion processes and correlated noise through conditional expectation operators - methodological insights we extend to discrete-time stochastic games with random coefficients.~This connection is particularly relevant as we derive our stochastic Hamiltonian system and the associated adjoint equations.

Despite extensive work on zero-sum and deterministic-coefficient games, discrete-time non-zero-sum LQ games with \textit{random coefficients} remain underexplored. In practice, many control systems involve random parameters due to external noise or model uncertainty, making it crucial to consider random coefficients.~In this setting, the Riccati equations become more complex. Different from the deterministic case that only involves conditional expectations, the stochastic Riccati equations here also include higher-order terms which introduce new theoretical challenges in solving for the Nash equilibrium.

\textbf{This paper addresses these challenges by focusing on discrete-time non-zero-sum LQ stochastic games with random coefficients.} We study the FBS$\Delta$E, known as stochastic Hamiltonian systems, that characterize Nash equilibria. We derive necessary and sufficient conditions for the existence of open-loop equilibria and construct non-symmetric stochastic Riccati equations that yield closed-loop feedback representations of the strategies.

The main contributions of this paper are as follows:
\begin{itemize}
	\item We establish a SMP for discrete-time non-zero-sum games with random coefficients. The Hamiltonian systems for each player are formulated, and the duality representation of the Nash equilibrium is presented.
	\item We propose a set of stochastic non-symmetric Riccati equations to decouple the forward-backward system. Solving these Riccati equations allows us to represent the open-loop equilibrium in a state feedback (closed-loop) form.
	\item Our work reveals essential differences between Riccati equations with random coefficients and those in the deterministic case, thus contributing new insights to the theory of stochastic games.
\end{itemize}

Due to space constraints, this paper focuses on open-loop Nash equilibria. Future research will address the closed-loop setting in detail.

The remainder of the paper is organized as follows: Section~2 formulates the problem and introduces necessary assumptions. Section~3 derives the variational inequality characterizing the Nash equilibrium. Section~4 presents the stochastic Hamiltonian system and its decoupling via stochastic Riccati equations. Section~5 concludes the paper.

\section{Problem Statement}\label{sec:problem_statement}
	
	
	Let $\Omega$ be a fixed sample space and $N$ a positive integer. Define the index sets $\mathcal{T} := \{0, 1, \ldots, N-1\}$ and $\overline{\mathcal{T}} := \{0, 1, \ldots, N\}$. Consider a complete filtered probability space $(\Omega, \mathcal{F}, \mathbb{F}, \mathbb{P})$, where the filtration $\mathbb{F} := \{\mathcal{F}_k\}_{k \in \mathcal{T}}$ is $\mathbb{P}$-complete. Let $\{\omega_k\}_{k \in \mathcal{T}}$ be a given martingale difference sequence adapted to $\mathbb{F}$. For each $k \in \mathcal{T}$, we assume the following conditions hold:
\begin{equation}\label{eq:martingale_diff}
\mathbb{E}[\omega_k \mid \mathcal{F}_{k-1}] = 0, \quad
\mathbb{E}[\omega_k^2 \mid \mathcal{F}_{k-1}] = 1, \quad
\mathbb{E}[\omega_k^4] < \infty.
\end{equation}
Throughout this paper, we adopt the specific choice $\mathcal{F}_k := \sigma(\omega_0, \ldots, \omega_k)$ for $k \in \mathcal{T}$, and define $\mathcal{F}_{-1} := \{\emptyset, \Omega\}$ for completeness.

We begin by introducing the notation and functional spaces used throughout this paper. Let $\mathbb{R}^n$ denote the $n$-dimensional Euclidean space, and $\mathbb{R}^{n \times m}$ the set of real $n \times m$ matrices. Denote by $\mathcal{S}^n \subset \mathbb{R}^{n \times n}$ the space of symmetric $n \times n$ matrices, and by $\mathcal{S}_+^n \subset \mathcal{S}^n$ the cone of symmetric non-negative definite matrices. For a matrix $U$, we write $U^{\top}$ for its transpose, $U^{-1}$ for its inverse (when it exists). Throughout this paper, $|\cdot|$ denotes the standard Euclidean norm for vectors in $\mathbb{R}^n$ and the Frobenius norm for matrices in $\mathbb{R}^{n \times m}$. Let $E$ denote a finite-dimensional real vector space (e.g., $\mathbb{R}^n$ or $\mathbb{R}^{n \times m}$). The following function spaces will be used:

\begin{itemize}
    \item \textbf{$L^2_{\mathcal{F}_{N-1}}(\Omega; E)$}: the space of all $E$-valued, $\mathcal{F}_{N-1}$-measurable random variables $\xi$ such that
    \[
    \|\xi\|_{L^2_{\mathcal{F}_{N-1}}(\Omega; E)} := \left[\mathbb{E} |\xi|^2 \right]^{1/2} < \infty.
    \]

    \item \textbf{$L^\infty_{\mathcal{F}_{N-1}}(\Omega; E)$}: the space of all essentially bounded $E$-valued, $\mathcal{F}_{N-1}$-measurable random variables.

    \item \textbf{$L^2_{\mathbb{F}}(\mathcal{T}; E)$}: the space of all $E$-valued stochastic processes $f(\cdot) = \{f_k\}_{k \in \mathcal{T}}$, where each $f_k$ is $\mathcal{F}_{k-1}$-measurable, such that
    \[
    \|f(\cdot)\|_{L^2_{\mathbb{F}}(\mathcal{T}; E)} := \left[\mathbb{E} \left(\sum_{k=0}^{N-1} |f_k|^2 \right) \right]^{1/2} < \infty.
    \]

    \item \textbf{$L^\infty_{\mathbb{F}}(\mathcal{T}; E)$}: the space of all essentially bounded $E$-valued stochastic processes $f(\cdot) = \{f_k\}_{k \in \mathcal{T}}$, where each $f_k$ is $\mathcal{F}_{k-1}$-measurable.
\end{itemize}

We begin by introducing the notation and functional spaces used throughout this paper. Let $\mathbb{R}^n$ denote the $n$-dimensional Euclidean space, and $\mathbb{R}^{n \times m}$ the set of real $n \times m$ matrices. Denote by $\mathcal{S}^n \subset \mathbb{R}^{n \times n}$ the space of symmetric $n \times n$ matrices, and by $\mathcal{S}_+^n \subset \mathcal{S}^n$ the cone of symmetric non-negative definite matrices. For a matrix $U$, we write $U^{\top}$ for its transpose, $U^{-1}$ for its inverse (when it exists). Throughout this paper, $|\cdot|$ denotes the standard Euclidean norm for vectors in $\mathbb{R}^n$ and the Frobenius norm for matrices in $\mathbb{R}^{n \times m}$. Let $E$ denote a finite-dimensional real vector space (e.g., $\mathbb{R}^n$ or $\mathbb{R}^{n \times m}$). The following function spaces will be used:

\begin{itemize}
    \item \textbf{$L^2_{\mathcal{F}_{N-1}}(\Omega; E)$}: the space of all $E$-valued, $\mathcal{F}_{N-1}$-measurable random variables $\xi$ such that
    $$
    \|\xi\|_{L^2_{\mathcal{F}_{N-1}}(\Omega; E)} := \left[\mathbb{E} |\xi|^2 \right]^{1/2} < \infty.
    $$

    \item \textbf{$L^\infty_{\mathcal{F}_{N-1}}(\Omega; E)$}: the space of all essentially bounded $E$-valued, $\mathcal{F}_{N-1}$-measurable random variables.

    \item \textbf{$L^2_{\mathbb{F}}(\mathcal{T}; E)$}: the space of all $E$-valued stochastic processes $f(\cdot) = \{f_k\}_{k \in \mathcal{T}}$, where each $f_k$ is $\mathcal{F}_{k-1}$-measurable, such that
    $$
    \|f(\cdot)\|_{L^2_{\mathbb{F}}(\mathcal{T}; E)} := \left[\mathbb{E} \left(\sum_{k=0}^{N-1} |f_k|^2 \right) \right]^{1/2} < \infty.
    $$

    \item \textbf{$L^\infty_{\mathbb{F}}(\mathcal{T}; E)$}: the space of all essentially bounded $E$-valued stochastic processes $f(\cdot) = \{f_k\}_{k \in \mathcal{T}}$, where each $f_k$ is $\mathcal{F}_{k-1}$-measurable.
\end{itemize}

We consider the following linear controlled stochastic difference equation (S$\Delta$E):
\begin{equation}\label{eq:2.1}
\left\{
\begin{aligned}
x_{k+1} &= A_k x_k + B_k u_k + C_k v_k + b_k \\
         & \quad + \left(D_k x_k + E_k u_k + F_k v_k + \sigma_k\right) \omega_k, \\
x_0 &= \xi \in \mathbb{R}^n, \quad k \in \mathcal{T},
\end{aligned}
\right.
\end{equation}
where $x_k \in \mathbb{R}^n$ is the state process, $u_k \in \mathbb{R}^m$ is the control of Player 1, and $v_k \in \mathbb{R}^l$ is the control of Player 2. The matrices $A_k, B_k, C_k, D_k, E_k, F_k$ are $\mathcal{F}_{k-1}$-measurable random matrices of appropriate dimensions, and $b_k, \sigma_k \in \mathbb{R}^n$ are $\mathcal{F}_{k-1}$-measurable random vectors.

We define the admissible control spaces $\mathcal{U}$ and $\mathcal{V}$ as follows:
\begin{equation}\label{eq:2.2}
\begin{split}
\mathcal{U} &:= L^2_{\mathbb{F}}(\mathcal{T}; \mathbb{R}^m) \\
&= \left\{ \mathbf{u} = \{u_k\}_{k \in \mathcal{T}} ~\bigg|~ u_k \text{ is } \mathcal{F}_{k-1}\text{-measurable},~ \mathbb{E} \sum_{k=0}^{N-1} |u_k|^2 < \infty \right\},
\end{split}
\end{equation}
\begin{equation}\label{eq:2.3}
\begin{split}
\mathcal{V} &:= L^2_{\mathbb{F}}(\mathcal{T}; \mathbb{R}^l) \\
&= \left\{ \mathbf{v} = \{v_k\}_{k \in \mathcal{T}} ~\bigg|~ v_k \text{ is } \mathcal{F}_{k-1}\text{-measurable},~ \mathbb{E} \sum_{k=0}^{N-1} |v_k|^2 < \infty \right\}.
\end{split}
\end{equation}

Here, $\mathbf{u} \in \mathcal{U}$ and $\mathbf{v} \in \mathcal{V}$ are called admissible control processes of Player 1 and Player 2, respectively. The process $x = \{x_k\}_{k \in \overline{\mathcal{T}}}$ is called the admissible state process corresponding to the control pair $(\mathbf{u}, \mathbf{v})$. The triple $(x, \mathbf{u}, \mathbf{v})$ is referred to as an admissible 3-tuple.

Our study focuses on non-zero-sum stochastic difference games, where the players' objectives do not directly oppose each other. In such frameworks, both players may pursue strategies that can simultaneously yield favorable outcomes for themselves, thereby fostering cooperation or mutual benefit. It introduces complexity to the optimal control strategies, as each player's decisions can influence the overall system dynamics in ways that may benefit both parties. The quadratic cost functionals for the initial state $x_0 = \xi$ and the admissible control processes $\mathbf{u}=\left(u_{0}, u_{1}, \ldots, u_{N-1}\right)$ and $\mathbf{v}=\left(v_{0}, v_{1}, \ldots, v_{N-1}\right)$ are defined as follows:

For player 1, the cost functional is given by:
\begin{equation}\label{eq:2.4}
\begin{split}
\mathcal{J}_1(0, &\xi; \mathbf{u}, \mathbf{v})
\\=& \frac{1}{2} \mathbb{E}\bigg\{\langle G_{N} x_{N}, x_{N}\rangle + 2\langle g_{N}, x_{N}\rangle + \sum_{k=0}^{N-1} \bigg[\langle Q_{k}x_{k}, x_{k}\rangle \\
&+ 2\langle L_{k}^{\top} u_{k}, x_{k}\rangle + \langle R_{k}u_{k}, u_{k}\rangle + 2\langle q_{k}, x_{k}\rangle + 2\langle \rho_{k}, u_{k}\rangle\bigg]\bigg\}.
\end{split}
\end{equation}

For player 2, the cost functional is similarly defined as:
\begin{equation}\label{eq:2.5}
\begin{split}
\mathcal{J}_2(0, &\xi; \mathbf{u}, \mathbf{v})
\\=& \frac{1}{2} \mathbb{E}\bigg\{\langle H_{N} x_{N}, x_{N}\rangle + 2\langle h_{N}, x_{N}\rangle + \sum_{k=0}^{N-1} \bigg[\langle P_{k}x_{k}, x_{k}\rangle \\
&+ 2\langle M_{k}^{\top} v_{k}, x_{k}\rangle + \langle S_{k}v_{k}, v_{k}\rangle + 2\langle p_{k}, x_{k}\rangle + 2\langle \theta_{k}, v_{k}\rangle\bigg]\bigg\}.
\end{split}
\end{equation}

In these cost functionals \eqref{eq:2.4} and \eqref{eq:2.5}, the random weighting coefficients $Q_{k}, R_{k}, P_{k}, S_{k}$ satisfy $Q_{k}^{\top} = Q_{k}$, $R_{k}^{\top} = R_{k}$, $P_{k}^{\top} = P_{k}$, $S_{k}^{\top} = S_{k}$; $G_{N}$ and $H_{N}$ are $\mathcal{F}_{N-1}$-measurable random symmetric real matrices; $g_{N}$ and $h_{N}$ are $\mathcal{F}_{N-1}$-measurable random variables; and $q_{k}, \rho_{k}, p_{k}, \theta_{k}$ are vector-valued $\mathcal{F}_{k-1}$-measurable processes.

\begin{rmk}\label{rmk:2.1}
Many previous papers have examined discrete-time stochastic systems with deterministic coefficients. Our most significant innovation in this article is related to the study of discrete-time systems with random coefficients. We identify the strategies for both players in discrete-time non-zero-sum stochastic difference games with random coefficients and nonhomogeneous terms.
\end{rmk}

Next, we impose the following standard assumptions on the coefficients.
\begin{ass}\label{ass:2.1}
The coefficient processes satisfy
\begin{equation}\label{eq:2.60}
\left\{
\begin{aligned}
&A(\cdot), D(\cdot) \in L^\infty_{\mathbb{F}}(\mathcal{T}; \mathbb{R}^{n \times n}), \\
&B(\cdot), E(\cdot) \in L^\infty_{\mathbb{F}}(\mathcal{T}; \mathbb{R}^{n \times m}), \\
&C(\cdot), F(\cdot) \in L^\infty_{\mathbb{F}}(\mathcal{T}; \mathbb{R}^{n \times l}), \\
&b(\cdot), \sigma(\cdot) \in L^2_{\mathbb{F}}(\mathcal{T}; \mathbb{R}^n).
\end{aligned}
\right.
\end{equation}
\end{ass}

\begin{ass}\label{ass:2.2}
The weighting coefficient processes satisfy
\begin{equation}\label{eq:2.61}
\left\{
\begin{aligned}
&Q(\cdot), P(\cdot) \in L^\infty_{\mathbb{F}}(\mathcal{T}; \mathbb{R}^{n \times n}), \quad L(\cdot) \in L^\infty_{\mathbb{F}}(\mathcal{T}; \mathbb{R}^{m \times n}), \\
&M(\cdot) \in L^\infty_{\mathbb{F}}(\mathcal{T}; \mathbb{R}^{l \times n}), \qquad\quad R(\cdot) \in L^\infty_{\mathbb{F}}(\mathcal{T}; \mathbb{R}^{m \times m}), \\
&S(\cdot) \in L^\infty_{\mathbb{F}}(\mathcal{T}; \mathbb{R}^{l \times l}), \qquad\quad~~ G_N, H_N \in L^\infty_{\mathcal{F}_{N-1}}(\Omega; \mathbb{R}^{n \times n}), \\
&g_N, h_N \in L^2_{\mathcal{F}_{N-1}}(\Omega; \mathbb{R}^n), \qquad q(\cdot), p(\cdot) \in L^2_{\mathbb{F}}(\mathcal{T}; \mathbb{R}^n), \\
&\rho(\cdot) \in L^2_{\mathbb{F}}(\mathcal{T}; \mathbb{R}^m), \qquad\qquad\quad\theta(\cdot) \in L^2_{\mathbb{F}}(\mathcal{T}; \mathbb{R}^l).
\end{aligned}
\right.
\end{equation}
\end{ass}

Given Assumption \ref{ass:2.1}, for any initial state $\xi \in \mathbb{R}^{n}$, $\mathbf{u} \in \mathcal{U}$ and $\mathbf{v} \in \mathcal{V}$, the state equation \eqref{eq:2.1} has a unique solution, denoted as $ x(\cdot) \equiv \{ x^{(\xi, \mathbf{u}, \mathbf{v})}_k \}_{k=0}^N \in L_{\mathbb{F}}^2(\overline{\mathcal{T}};\mathbb{R}^{n})$. Furthermore, under Assumptions \ref{ass:2.1} and \ref{ass:2.2}, for any initial state $\xi \in \mathbb{R}^{n}$, $\mathbf{u} \in \mathcal{U}$ and $\mathbf{v} \in \mathcal{V}$, it is easy to check that the cost functionals \eqref{eq:2.4} and \eqref{eq:2.5} are well-defined. Therefore, the discrete-time non-zero-sum stochastic difference games with random coefficients are well-defined.

\begin{ass}\label{ass:2.3}
    For all $ k \in \mathcal{T} $, there exists a constant $\delta > 0$ such that
    \begin{equation}\label{eq:2.6}
        \left\{
        \begin{aligned}
            &G_{N},~H_{N} \succeq 0,~~ R_{k},~S_{k} \succeq \delta I, \\
            &Q_{k} - L^{\top}_{k} R^{-1}_{k} L_{k} \succeq 0,~~ \\
            &P_{k} - M^{\top}_{k} S^{-1}_{k} M_{k} \succeq 0, \quad k \in \mathcal{T}.
        \end{aligned}
        \right.
    \end{equation}
\end{ass}

Now, we formulate our non-zero-sum stochastic difference games as follows:
\begin{pro}[\textbf{DTSDG}]\label{pro:2.2}
    For any given initial value $\xi \in \mathbb{R}^n$, find a pair of admissible control $(\mathbf{u}^*, \mathbf{v}^*)\in (\mathcal{U}\times \mathcal{V})$ such that
    \begin{equation}\label{eq:2.7}
        \left\{
        \begin{aligned}
            V_1(0, \xi) &\triangleq \mathcal{J}_1(0, \xi, \mathbf{u}^*, \mathbf{v}^*) = \inf_{\mathbf{u} \in \mathcal{U}} \mathcal{J}_1(0, \xi, \mathbf{u}, \mathbf{v}^*), \\
            V_2(0, \xi) &\triangleq \mathcal{J}_2(0, \xi, \mathbf{u}^*, \mathbf{v}^*) = \inf_{\mathbf{v} \in \mathcal{V}} \mathcal{J}_2(0, \xi, \mathbf{u}^*, \mathbf{v}).
        \end{aligned}
        \right.
    \end{equation}
\end{pro}

Any pair $(\mathbf{u}^*, \mathbf{v}^*) \in \mathcal{U} \times \mathcal{V}$ that satisfies the conditions in \eqref{eq:2.7} is termed an open-loop Nash equilibrium point of Problem \ref{pro:2.2} (DTSDG), with the corresponding state trajectory denoted as $ x^*(\cdot) \equiv \{ x^{(\xi, \mathbf{u}^*, \mathbf{v}^*)}_k \}_{k=0}^N $. The value functions for player 1 and player 2 are defined as $ V_1(0, \xi) $ and $ V_2(0, \xi) $, respectively.

In the special case where $ b_k, \sigma_k, g_N, h_N, q_k, p_k, \rho_k, \theta_k = 0 $ and $ \xi = 0 $, the corresponding cost functionals and value functions are denoted by $ \mathcal{J}_1^{0}(0, 0, \mathbf{u}, \mathbf{v}) $ and $ V_1^{0}(0, 0) $ for player 1, and $ \mathcal{J}_2^{0}(0, 0, \mathbf{u}, \mathbf{v}) $ and $ V_2^{0}(0, 0) $ for player 2. We denote this specific problem as $(\text{DTSDG})^{0}$. In this case, the state equation is expressed as:
\begin{equation}\label{eq:2.8}
    \left\{
    \begin{aligned}
        x^{(\mathbf{u}, \mathbf{v})}_{k+1} &= A_k x^{(\mathbf{u}, \mathbf{v})}_k + B_k u_k + C_k v_k \\
        & \quad + \left[ D_k x^{(\mathbf{u}, \mathbf{v})}_k + E_k u_k + F_k v_k \right] \omega_k, \\
        x_0^{(\mathbf{u}, \mathbf{v})} &= 0 \in \mathbb{R}^n, \quad k \in \mathcal{T}.
    \end{aligned}
    \right.
\end{equation}

The corresponding cost functionals for player 1 and player 2 in this scenario are defined as:
\begin{equation}\label{eq:2.9}
    \begin{split}
        \mathcal{J}_1^{0}(0, &0; \mathbf{u}, \mathbf{v}) \\=& \frac{1}{2} \mathbb{E} \bigg\{  \langle G_N x^{(\mathbf{u}, \mathbf{v})}_N, x^{(\mathbf{u}, \mathbf{v})}_N \rangle  + \sum_{k=0}^{N-1} \Big[ \langle Q_k x^{(\mathbf{u}, \mathbf{v})}_k, x^{(\mathbf{u}, \mathbf{v})}_k \rangle \\
        & + 2 \langle L_k^{\top} u_k, x^{(\mathbf{u}, \mathbf{v})}_k \rangle + \langle R_k u_k, u_k \rangle \Big] \bigg\},
    \end{split}
\end{equation}
and
\begin{equation}\label{eq:2.10}
    \begin{split}
        \mathcal{J}_2^{0}(0, &0; \mathbf{u}, \mathbf{v}) \\=& \frac{1}{2} \mathbb{E} \bigg\{  \langle H_N x^{(\mathbf{u}, \mathbf{v})}_N, x^{(\mathbf{u}, \mathbf{v})}_N \rangle  + \sum_{k=0}^{N-1} \Big[ \langle P_k x^{(\mathbf{u}, \mathbf{v})}_k, x^{(\mathbf{u}, \mathbf{v})}_k \rangle \\
        & + 2 \langle M_k^{\top} v_k, x^{(\mathbf{u}, \mathbf{v})}_k \rangle + \langle S_k v_k, v_k \rangle \Big] \bigg\}.
    \end{split}
\end{equation}

\section{Variational Inequality for Nash Equilibrium in Stochastic Games}

This section establishes a rigorous mathematical framework for deriving the variational inequality that characterizes the Nash equilibrium in two-player stochastic difference games. We will first discuss the $G\hat{a}teaux$ differentiability of cost functionals. Our primary focus will then be on applying this concept to derive the necessary and sufficient conditions for the existence of the Nash equilibrium point of Problem 2.2, specifically the variational inequality characterization. To support our analysis, we will introduce several fundamental lemmas that will guide us to the desired variational inequality.

\begin{lem}\label{lem:3.1}
    Let Assumptions \ref{ass:2.1}-\ref{ass:2.3} hold. Let $ \mathbf{v} \in\mathcal{V}$ be any given admissible control.
    Then for any admissible controls $\mathbf{u}\in \mathcal{U}$, $\mathbf{w}\in \mathcal{U}$,
    $\mathcal{J}_1(0, \xi; \mathbf{u}, \mathbf{v})$ is $G\hat{a}teaux$ differentiable at $\mathbf{u}$
    and the corresponding $G\hat{a}teaux$ derivative $\mathcal{J}_1'(0, \xi; \mathbf{u}, \mathbf{v})$ at $\mathbf{u}$ is given by
    \begin{equation}\label{eq:3.1}
        \begin{split}
            \big\langle &\mathcal{J}_1'(0, \xi; \mathbf{u}, \mathbf{v}), \mathbf{w} \big\rangle
            \\=& \mathbb{E} \bigg[ \big\langle G_{N}x_{N} + g_{N}, x^{(\mathbf{w}, \mathbf{0})}_{N} \big\rangle + \sum^{N-1}_{k=0} \bigg( \big\langle Q_{k}x_{k} + L^{\top}_{k}u_{k} + q_{k}, x^{(\mathbf{w}, \mathbf{0})}_{k} \big\rangle \\
            &+ \big\langle L_{k}x_{k} + R_{k}u_{k} + \rho_{k}, w_{k} \big\rangle \bigg) \bigg],
        \end{split}
    \end{equation}
    where $x(\cdot)$ is the solution of \eqref{eq:2.1} corresponding to the admissible control $(\mathbf{u}$,$\mathbf{v})$
    with the initial state $x_{0} = \xi$; $x^{(\mathbf{w}, \mathbf{0})}$ is the solution of \eqref{eq:2.8} corresponding to
    the admissible control with $(\mathbf{u}$,$\mathbf{v})$ =$(\mathbf{w},0)$.
\end{lem}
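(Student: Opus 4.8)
The plan is to compute the G\^{a}teaux derivative directly from its definition as the limit of difference quotients, exploiting the affine dependence of the state on the control. Fix $\mathbf{v} \in \mathcal{V}$ and $\mathbf{u}, \mathbf{w} \in \mathcal{U}$, and for $\epsilon \neq 0$ let $x^\epsilon$ denote the solution of \eqref{eq:2.1} associated with the control pair $(\mathbf{u} + \epsilon \mathbf{w}, \mathbf{v})$ and initial datum $\xi$, so that $x^0 = x$. The structural heart of the argument is a superposition identity: because the dynamics \eqref{eq:2.1} are linear in $(x_k, u_k, v_k)$ and the perturbation enters only through $u_k$ (with $\mathbf{v}$ frozen), the difference $x^\epsilon_k - x_k$ satisfies a linear recursion with vanishing inhomogeneous terms $b_k, \sigma_k$ and zero initial condition.

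Concretely, setting $z_k := (x^\epsilon_k - x_k)/\epsilon$, I would subtract the two state recursions to obtain $z_{k+1} = A_k z_k + B_k w_k + (D_k z_k + E_k w_k)\omega_k$ with $z_0 = 0$, which is exactly the homogeneous system \eqref{eq:2.8} driven by $(\mathbf{w}, \mathbf{0})$. Hence $z_k = x^{(\mathbf{w},\mathbf{0})}_k$ for every $\epsilon \neq 0$, yielding the exact identity $x^\epsilon_k = x_k + \epsilon\, x^{(\mathbf{w},\mathbf{0})}_k$; crucially this holds for all $\epsilon$ at once, not merely in the limit, which makes the subsequent expansion exact rather than asymptotic.

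I would then substitute $x^\epsilon_k = x_k + \epsilon\, x^{(\mathbf{w},\mathbf{0})}_k$ and $u_k + \epsilon w_k$ into the quadratic functional \eqref{eq:2.4}. Since each summand is at most quadratic in $(x,u)$ and each of these is affine in $\epsilon$, the result is an exact polynomial $\mathcal{J}_1(0,\xi;\mathbf{u}+\epsilon\mathbf{w},\mathbf{v}) = \mathcal{J}_1(0,\xi;\mathbf{u},\mathbf{v}) + \epsilon\, I_1 + \epsilon^2\, I_2$. Extracting the first-order coefficient $I_1$, using the symmetries $G_N^\top = G_N$ and $Q_k^\top = Q_k$ to merge the two cross terms of each quadratic form, and using the adjoint relation $\langle L_k x_k, w_k\rangle = \langle L_k^\top w_k, x_k\rangle$, the factors of $2$ in the cross terms cancel the global $\tfrac12$ and $I_1$ collapses precisely to the right-hand side of \eqref{eq:3.1}. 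The difference quotient equals $I_1 + \epsilon I_2$, so its limit as $\epsilon \to 0$ is $I_1$, giving simultaneously the existence of the derivative and the claimed formula.

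The one point requiring care, and the main technical obstacle, is verifying that $I_1$ and $I_2$ are finite, so that these manipulations are legitimate and the derivative is well defined. Here I would invoke the well-posedness recorded after Assumption \ref{ass:2.1}, which gives $x^{(\mathbf{w},\mathbf{0})} \in L^2_{\mathbb{F}}(\overline{\mathcal{T}}; \mathbb{R}^n)$ with norm controlled by that of $\mathbf{w}$, and then bound each inner product appearing in $I_1$ and $I_2$ via the Cauchy–Schwarz inequality, using the $L^\infty$ bounds on $G_N, Q_k, L_k, R_k$ and the $L^2$ bounds on $g_N, q_k, \rho_k$ from Assumption \ref{ass:2.2}. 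This renders all expectations finite, validates the term-by-term polynomial expansion, and shows that $\mathbf{w} \mapsto I_1 = \langle \mathcal{J}_1'(0,\xi;\mathbf{u},\mathbf{v}), \mathbf{w}\rangle$ is a bounded linear functional of the direction, as required.
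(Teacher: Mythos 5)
Your proposal is correct and follows essentially the same route as the paper: both exploit the exact superposition identity $x^\epsilon_k = x_k + \epsilon\, x^{(\mathbf{w},\mathbf{0})}_k$ to obtain an exact quadratic expansion of the cost in $\epsilon$ (your $I_2$ is the paper's $\tfrac12\mathcal{J}_1^{0}(0,0;\mathbf{w},\mathbf{0})$), and then read off the first-order coefficient as the G\^{a}teaux derivative. Your added remarks on integrability and boundedness of the linear functional are a welcome tightening of a point the paper leaves implicit, but they do not change the argument.
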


\begin{proof}
    We consider the cost functional for player 1 given by \eqref{eq:2.4}
    where $x_k$ is the state process satisfying the system:
    \begin{equation}\label{eq:3.2}
        \begin{split}
            x_{k+1} = &A_{k} x_{k} + B_{k} u_{k} + C_{k} v_{k} + b_{k} \\
            &+ \big[D_{k} x_{k} + E_{k} u_{k} + F_{k} v_{k} + \sigma_{k}\big] \omega_{k},
        \end{split}
    \end{equation}
    with $x_0 = \xi$. Consider perturbing the control $\mathbf{u}$ by a small parameter $\epsilon>0$
    such that the perturbed control is $\mathbf{u} + \epsilon \mathbf{w}$. Then the state process
    corresponding to the admissible pair $(\mathbf{u} + \epsilon \mathbf{w},\mathbf{v})$ denoted by $x^{\epsilon}(\cdot)$ satisfies
    \begin{equation}\label{eq:3.4}
        \begin{split}
            x_{k+1}^{\epsilon} = &A_{k} x_{k}^{\epsilon} + B_{k} (u_{k} + \epsilon w_{k}) + C_{k} v_{k} + b_{k} \\
            &+ \big[D_{k} x_{k}^{\epsilon} + E_{k} (u_{k} + \epsilon w_{k}) + F_{k} v_{k} + \sigma_{k}\big] \omega_{k}.
        \end{split}
    \end{equation}
    The corresponding cost functional is given by
    \begin{equation}\label{eq:3.3}
        \mathcal{J}_1(0, \xi; \mathbf{u} + \epsilon \mathbf{w}, \mathbf{v}).
    \end{equation}
    From \eqref{eq:2.8}, we get that $x^{(\mathbf{w}, \mathbf{0})}$ satisfies
    \begin{equation}\label{eq:3.6}
        \begin{split}
            x_{k+1}^{(\mathbf{w}, \mathbf{0})} = A_{k} x_{k}^{(\mathbf{w}, \mathbf{0})} + B_{k} w_{k} + (D_{k} x_{k}^{(\mathbf{w}, \mathbf{0})} + E_{k} w_{k} ) \omega_{k}.
        \end{split}
    \end{equation}
    Then from \eqref{eq:3.6}, \eqref{eq:3.2} and \eqref{eq:3.4}, it is easy to check that
    \begin{equation}\label{eq:3.7}
        x_{k+1}^{\epsilon} = x_{k+1} + \epsilon x_{k+1}^{(\mathbf{w}, \mathbf{0})}
    \end{equation}
    Expanding $\mathcal{J}_1(0, \xi; \mathbf{u} + \epsilon \mathbf{w}, \mathbf{v})$, we have
    \small{
    \begin{equation}\label{eq:3.8}
        \begin{split}
            \mathcal{J}_1&(0, \xi; \mathbf{u} + \epsilon \mathbf{w}, \mathbf{v})
            \\= &\mathcal{J}_1(0, \xi; \mathbf{u}, \mathbf{v}) + \frac{1}{2}\epsilon^2\mathcal{J}_1^{0}(0,0;\mathbf{w},\mathbf{0}) + \epsilon \mathbb{E}\bigg[ \langle G_{N} x_{N} + g_{N}, x^{(\mathbf{w}, \mathbf{0})}_{N} \rangle \\
            &+ \sum_{k=0}^{N-1} \Big( \langle Q_{k} x_{k} + L_{k}^{\top} u_{k} + q_{k}, x^{(\mathbf{w}, \mathbf{0})}_{k} \rangle + \langle L_{k} x_{k} + R_{k} u_{k} + \rho_{k}, w_{k} \rangle \Big) \bigg],
        \end{split}
    \end{equation}}
\normalsize
    where $\mathcal{J}_1^{0}(0,0;\mathbf{w},\mathbf{0})$ is defined by \eqref{eq:2.9} with $\mathbf{v}=0$. Therefore,
    \begin{equation}\label{eq:3.9}
        \begin{split}
            \lim_{\epsilon \to 0^{+}}& \frac{\mathcal{J}_1(0, \xi; \mathbf{u} + \epsilon \mathbf{w}, \mathbf{v}) - \mathcal{J}_1(0, \xi; \mathbf{u}, \mathbf{v})}{\epsilon} \\
            = &\mathbb{E} \bigg[ \langle G_{N} x_{N} + g_{N}, x^{(\mathbf{w}, \mathbf{0})}_{N} \rangle + \sum_{k=0}^{N-1} \Big( \langle Q_{k} x_{k} + L_{k}^{\top} u_{k} + q_{k}, x^{(\mathbf{w}, \mathbf{0})}_{k} \rangle \\
            &+ \langle L_{k} x_{k} + R_{k} u_{k} + \rho_{k}, w_{k} \rangle \Big) \bigg].
        \end{split}
    \end{equation}
    As a result, $\mathcal{J}_1(0, \xi; \mathbf{u}, \mathbf{v})$ is $G\hat{a}teaux$ differentiable
    at $\mathbf{u}$ and the corresponding $G\hat{a}teaux$ derivative $\mathcal{J}_1'(0, \xi; \mathbf{u}, \mathbf{v})$ at $\mathbf{u}$ is given by \eqref{eq:3.1}.
    The proof is complete.
\end{proof}

\begin{lem}\label{lem:3.2}
    Let Assumptions \ref{ass:2.1}-\ref{ass:2.3} hold. Then for any admissible controls
    $(\mathbf{u}, \mathbf{v} )\in \mathcal{U}\times\mathcal{V}$, $\mathbf{z}\in \mathcal{V}$,
    $\mathcal{J}_2(0, \xi; \mathbf{u}, \mathbf{v})$ is $G\hat{a}teaux$ differentiable at $\mathbf{v}$
    and the corresponding $G\hat{a}teaux$ derivative $\mathcal{J}_2'(0, \xi; \mathbf{u}, \mathbf{v})$ is given by
    \begin{equation}\label{eq:3.10}
        \begin{split}
            \big\langle \mathcal{J}&_2'(0, \xi; \mathbf{u}, \mathbf{v}), \mathbf{z}\big\rangle
            \\=& \mathbb{E} \bigg[ \big\langle H_{N}x_{N} + h_{N}, x^{(\mathbf{0}, \mathbf{z})}_{N} \big\rangle + \sum^{N-1}_{k=0} \bigg( \big\langle P_{k}x_{k} + M^{\top}_{k}v_{k} + p_{k}, x^{(\mathbf{0}, \mathbf{z})}_{k} \big\rangle \\
            &+ \big\langle M_{k}x_{k} + S_{k}v_{k} + \theta_{k}, z_{k} \big\rangle \bigg) \bigg],
        \end{split}
    \end{equation}
    where $x(\cdot)$ is the solution of \eqref{eq:2.1} corresponding to the admissible control $(\mathbf{u}$,$\mathbf{v})$
    with the initial state $x_{0} = \xi$; $x^{(\mathbf{0}, \mathbf{z})}$ is the solution of \eqref{eq:2.8} corresponding to
    the admissible control with $(\mathbf{u}$,$\mathbf{v})$ =$(\mathbf{0},\mathbf z)$.
\end{lem}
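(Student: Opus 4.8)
The plan is to mirror the perturbation argument of Lemma \ref{lem:3.1}, now perturbing Player 2's control while holding $\mathbf{u}$ fixed. Fix $(\mathbf{u},\mathbf{v})\in\mathcal{U}\times\mathcal{V}$ and $\mathbf{z}\in\mathcal{V}$, and for $\epsilon>0$ let $x^{\epsilon}(\cdot)$ denote the state trajectory of \eqref{eq:2.1} driven by $(\mathbf{u},\mathbf{v}+\epsilon\mathbf{z})$ with $x_0=\xi$. The first step is to establish the superposition identity $x^{\epsilon}_k = x_k + \epsilon\, x^{(\mathbf{0},\mathbf{z})}_k$ for all $k\in\overline{\mathcal{T}}$, the analogue of \eqref{eq:3.7}. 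This follows because \eqref{eq:2.1} is affine in the controls: the difference $x^{\epsilon}_k-x_k$ starts at $0$, the inhomogeneous data $b_k,\sigma_k$ and the shared $\mathbf{u}$-contributions cancel, and what remains is precisely the homogeneous system \eqref{eq:2.8} driven by $(\mathbf{0},\epsilon\mathbf{z})$, whose solution is $\epsilon\,x^{(\mathbf{0},\mathbf{z})}_k$ by linearity.

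Second, I would substitute $x^{\epsilon}_k$ and the perturbed control $v_k+\epsilon z_k$ into the cost functional \eqref{eq:2.5} and expand each quadratic and bilinear term. Since the state is affine in $\epsilon$ and the cost is quadratic, the outcome is an \emph{exact} second-order polynomial in $\epsilon$ with no higher-order remainder, of the form $\mathcal{J}_2(0,\xi;\mathbf{u},\mathbf{v}+\epsilon\mathbf{z}) = \mathcal{J}_2(0,\xi;\mathbf{u},\mathbf{v}) + \epsilon\Lambda + \tfrac{1}{2}\epsilon^2\mathcal{J}_2^{0}(0,0;\mathbf{0},\mathbf{z})$, exactly paralleling \eqref{eq:3.8}. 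Here the order-$\epsilon^2$ coefficient is identified with the purely homogeneous cost \eqref{eq:2.10} evaluated at controls $(\mathbf{0},\mathbf{z})$, and the linear coefficient $\Lambda$ is the quantity to be matched with the right-hand side of \eqref{eq:3.10}.

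The one point demanding care is the bookkeeping of the two distinct families of linear terms appearing in \eqref{eq:3.10}. The cross terms from $\langle H_N x^{\epsilon}_N, x^{\epsilon}_N\rangle$ and the running quadratics $\langle P_k x^{\epsilon}_k, x^{\epsilon}_k\rangle$ pair the base state against the variation $x^{(\mathbf{0},\mathbf{z})}$, producing the family $\langle H_N x_N+h_N, x^{(\mathbf{0},\mathbf{z})}_N\rangle$ and $\langle P_k x_k+M_k^{\top}v_k+p_k, x^{(\mathbf{0},\mathbf{z})}_k\rangle$. By contrast, the term $2\langle M_k^{\top}(v_k+\epsilon z_k), x^{\epsilon}_k\rangle$ generates \emph{both} a contribution of the first type (from expanding $x^{\epsilon}_k$) and one of the second type, where one uses the adjoint identity $\langle M_k^{\top}z_k, x_k\rangle=\langle M_k x_k, z_k\rangle$ to land the increment on $z_k$; likewise $\langle S_k(v_k+\epsilon z_k), v_k+\epsilon z_k\rangle$ (using symmetry of $S_k$) and $2\langle\theta_k, v_k+\epsilon z_k\rangle$ feed the $z_k$-paired family $\langle M_k x_k+S_k v_k+\theta_k, z_k\rangle$. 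Assembling these together with the prefactor $\tfrac12$ in \eqref{eq:2.5} yields exactly the two inner-product families of \eqref{eq:3.10}.

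Finally, I would divide by $\epsilon$ and let $\epsilon\to0^{+}$: the exact-polynomial structure forces the order-$\epsilon^2$ term to vanish, leaving $\lim_{\epsilon\to0^{+}}\epsilon^{-1}\big[\mathcal{J}_2(0,\xi;\mathbf{u},\mathbf{v}+\epsilon\mathbf{z})-\mathcal{J}_2(0,\xi;\mathbf{u},\mathbf{v})\big]=\Lambda$. Because $\Lambda$ is linear in $\mathbf{z}$ and, by the $L^2$ estimates on the state under Assumptions \ref{ass:2.1}--\ref{ass:2.2}, all the expectations involved are finite, the $G\hat{a}teaux$ derivative exists and equals \eqref{eq:3.10}. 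I do not anticipate a genuine obstacle beyond this symmetric bookkeeping: the entire argument is the mirror image of Lemma \ref{lem:3.1}, obtained by swapping $(\mathbf{u},\mathbf{w})$ for $(\mathbf{v},\mathbf{z})$ and the Player-1 data $(G_N,g_N,Q_k,L_k,R_k,q_k,\rho_k)$ for the Player-2 data $(H_N,h_N,P_k,M_k,S_k,p_k,\theta_k)$.
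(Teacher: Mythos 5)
Your proposal is correct and follows exactly the route the paper intends: the paper's own proof of Lemma \ref{lem:3.2} simply states that the argument is ``similar'' to that of Lemma \ref{lem:3.1}, and your write-up is precisely that mirrored perturbation argument (superposition identity, exact quadratic expansion in $\epsilon$, and the symmetric bookkeeping swapping the Player-1 data for the Player-2 data). No gaps.
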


\begin{proof}
    We have proved that $\mathcal{J}_1(0, \xi; \mathbf{u}, \mathbf{v})$ is $G\hat{a}teaux$-differentiable in Lemma \ref{lem:3.1}.
    Similarly, we can show that $\mathcal{J}_2(0, \xi; \mathbf{u}, \mathbf{v})$ is also $G\hat{a}teaux$-differentiable.
\end{proof}

\begin{thm}\label{thm:3.3}
    Let Assumptions \ref{ass:2.1}-\ref{ass:2.3} hold. Then a necessary and sufficient condition for
    $(\mathbf{u}^{\ast}, \mathbf{v}^{\ast}) \in \mathcal{U} \times \mathcal{V}$ to be a Nash equilibrium point
    of Problem \ref{pro:2.2} is that
    \begin{equation}\label{eq:3.11}
        \langle \mathcal{J}_1'(0, \xi; \mathbf{u}^{\ast}, \mathbf{v}^{\ast}), \mathbf{w}\rangle = 0,~~ \forall \mathbf{w} \in \mathcal{U},
    \end{equation}
    and
    \begin{equation}\label{eq:3.12}
        \langle \mathcal{J}_2'(0, \xi; \mathbf{u}^{\ast}, \mathbf{v}^{\ast}),\mathbf{z} \rangle = 0,~~ \forall \mathbf{z} \in \mathcal{V}.
    \end{equation}
\end{thm}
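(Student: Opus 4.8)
The plan is to exploit the fact that the equilibrium conditions \eqref{eq:2.7} decouple into two \emph{independent} unconstrained minimizations over the linear spaces $\mathcal{U}$ and $\mathcal{V}$: with $\mathbf{v}^*$ frozen, $\mathbf{u}^*$ must minimize $\mathbf{u}\mapsto\mathcal{J}_1(0,\xi;\mathbf{u},\mathbf{v}^*)$, and symmetrically $\mathbf{v}^*$ must minimize $\mathbf{v}\mapsto\mathcal{J}_2(0,\xi;\mathbf{u}^*,\mathbf{v})$. Since Lemma \ref{lem:3.2} is the exact analogue of Lemma \ref{lem:3.1}, it suffices to prove the equivalence of minimality and \eqref{eq:3.11} for player 1; the argument for \eqref{eq:3.12} is verbatim. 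The engine of both directions is the exact quadratic expansion \eqref{eq:3.8}, which for a fixed direction $\mathbf{w}\in\mathcal{U}$ exhibits $\epsilon\mapsto\mathcal{J}_1(0,\xi;\mathbf{u}^*+\epsilon\mathbf{w},\mathbf{v}^*)$ as a genuine quadratic polynomial in $\epsilon$, with linear coefficient $\langle\mathcal{J}_1'(0,\xi;\mathbf{u}^*,\mathbf{v}^*),\mathbf{w}\rangle$ and quadratic coefficient $\tfrac12\mathcal{J}_1^{0}(0,0;\mathbf{w},\mathbf{0})$.

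For necessity I would argue that if $(\mathbf{u}^*,\mathbf{v}^*)$ is an equilibrium, then $\epsilon=0$ minimizes this one-dimensional quadratic for every $\mathbf{w}$; equivalently, the right-hand difference quotient in \eqref{eq:3.9} is nonnegative, so $\langle\mathcal{J}_1'(0,\xi;\mathbf{u}^*,\mathbf{v}^*),\mathbf{w}\rangle\ge0$. Because $\mathcal{U}$ is a linear space and the derivative is linear in $\mathbf{w}$ (clear from \eqref{eq:3.1}, as $x^{(\mathbf{w},\mathbf{0})}$ depends linearly on $\mathbf{w}$), replacing $\mathbf{w}$ by $-\mathbf{w}$ gives the reverse inequality, whence \eqref{eq:3.11}. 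Lemma \ref{lem:3.2} yields \eqref{eq:3.12} identically.

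For sufficiency I would assume \eqref{eq:3.11}, apply \eqref{eq:3.8} with $\epsilon=1$ and the arbitrary direction $\mathbf{w}=\mathbf{u}-\mathbf{u}^*$, and discard the now-vanishing linear term to obtain
\begin{equation*}
\begin{split}
\mathcal{J}_1(0,\xi;\mathbf{u},\mathbf{v}^*) ={}& \mathcal{J}_1(0,\xi;\mathbf{u}^*,\mathbf{v}^*) \\
& + \tfrac12\,\mathcal{J}_1^{0}(0,0;\mathbf{u}-\mathbf{u}^*,\mathbf{0}).
\end{split}
\end{equation*}
Thus $\mathbf{u}^*$ minimizes $\mathcal{J}_1(0,\xi;\cdot,\mathbf{v}^*)$ as soon as $\mathcal{J}_1^{0}(0,0;\mathbf{w},\mathbf{0})\ge0$ for all $\mathbf{w}$, and the symmetric statement using \eqref{eq:3.12} handles player 2.

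The main obstacle is therefore the nonnegativity of the homogeneous cost, i.e.\ the convexity of each reduced problem, and this is exactly where Assumption \ref{ass:2.3} enters. The plan is to complete the square in the running cost of \eqref{eq:2.9}: for each $k$, with $x_k:=x^{(\mathbf{w},\mathbf{0})}_k$ the homogeneous state solving \eqref{eq:2.8} under $(\mathbf{w},\mathbf{0})$,
\begin{equation*}
\begin{split}
&\langle Q_k x_k, x_k\rangle + 2\langle L_k^{\top} w_k, x_k\rangle + \langle R_k w_k, w_k\rangle \\
&\quad = \big\langle (Q_k - L_k^{\top}R_k^{-1}L_k)x_k, x_k\big\rangle \\
&\qquad + \big\langle R_k(w_k + R_k^{-1}L_k x_k),\, w_k + R_k^{-1}L_k x_k\big\rangle.
\end{split}
\end{equation*}
The conditions $R_k\succeq\delta I$, $Q_k - L_k^{\top}R_k^{-1}L_k\succeq0$ and $G_N\succeq0$ render every summand together with the terminal term nonnegative, so $\mathcal{J}_1^{0}(0,0;\mathbf{w},\mathbf{0})\ge0$; the analogous computation for player 2 invokes $S_k\succeq\delta I$, $P_k - M_k^{\top}S_k^{-1}M_k\succeq0$ and $H_N\succeq0$. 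This closes the equivalence, and I expect the square-completion estimate to be the only nonroutine ingredient, everything else following from the exact identity \eqref{eq:3.8}.
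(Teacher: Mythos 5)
Your proof is correct and follows essentially the same route as the paper's: necessity via two-sided directional derivatives and linearity in $\mathbf{w}$, and sufficiency via the exact quadratic expansion \eqref{eq:3.8} with $\epsilon=1$ and $\mathbf{w}=\mathbf{u}-\mathbf{u}^{\ast}$. The only difference is that you explicitly justify $\mathcal{J}_1^{0}(0,0;\mathbf{w},\mathbf{0})\ge 0$ by completing the square under Assumption \ref{ass:2.3}, a step the paper's proof asserts without detail; this is a welcome addition rather than a divergence.
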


\begin{proof}
    $\mathbf{(Sufficiency):}$
    For player 1, assume there exists an admissible control $\mathbf{u}^{\ast}\in \mathcal{U}$ such that
    \begin{equation}\label{eq:3.13}
        \langle \mathcal{J}_1'(0, \xi; \mathbf{u}^{\ast}, \mathbf{v}^{\ast}), \mathbf{w}\rangle = 0,~~ \forall \mathbf{w} \in \mathcal{U}.
    \end{equation}
    We need to show that $\mathbf{u}^{\ast}$ is an optimal control for player 1 given $\mathbf{v}^{\ast}$.
    For any admissible control $\mathbf{u}\in \mathcal{U}$, we need to verify:
    $$
    \mathcal{J}_1(0, \xi; \mathbf{u}, \mathbf{v}^{\ast}) - \mathcal{J}_1(0, \xi; \mathbf{u}^{\ast}, \mathbf{v}^{\ast}) \geq 0.
    $$
    Considering $\epsilon=1$, $\mathbf{u}=\mathbf{u}^\ast$, $\mathbf{w}= \mathbf{u}-\mathbf{u}^\ast$ in \eqref{eq:3.8} and \eqref{eq:3.13},
    \begin{equation}\label{eq:3.14}
        \begin{split}
            &\mathcal{J}_1(0, \xi; \mathbf{u}, \mathbf{v}^{\ast}) - \mathcal{J}_1(0, \xi; \mathbf{u}^{\ast}, \mathbf{v}^{\ast}) \\
            &= \mathcal{J}_1(0, \xi;  \mathbf{u}^{\ast}+\mathbf{u}-\mathbf{u}^\ast, \mathbf{v}^{\ast}) - \mathcal{J}_1(0, \xi; \mathbf{u}^{\ast}, \mathbf{v}^{\ast}) \\
            &= \frac{1}{2}\mathcal{J}_1^0(0, 0; \mathbf{u} - \mathbf{u}^{\ast}, 0) + \langle \mathcal{J}_1'(0, \xi; \mathbf{u}^{\ast}, \mathbf{v}^{\ast}), \mathbf{u} - \mathbf{u}^{\ast}  \rangle \\
            &\geq 0 + \langle \mathcal{J}_1'(0, \xi; \mathbf{u}^{\ast}, \mathbf{v}^{\ast}), \mathbf{u} - \mathbf{u}^{\ast} \rangle \\
            &= 0.
        \end{split}
    \end{equation}
    This implies that $\mathbf{u}^{\ast}$ is optimal for player 1 given $\mathbf{v}^{\ast}$.

    Similarly for player 2, we can prove $\mathcal{J}_2(0, \xi; \mathbf{u}^{\ast}, \mathbf{v}) - \mathcal{J}_2(0, \xi; \mathbf{u}^{\ast}, \mathbf{v}^{\ast}) \geq 0$,
    which implies that $\mathbf{v}^{\ast}$ is optimal for player 2 given $\mathbf{u}^{\ast}$.
    Therefore, $(\mathbf{u}^{\ast}, \mathbf{v}^{\ast})$ is a Nash equilibrium point and sufficiency is proved.

    $\mathbf{(Necessity):}$
    Assume $(\mathbf{u}^{\ast}, \mathbf{v}^{\ast})$ is a Nash equilibrium point. Then for player 1, $\mathbf{u}^{\ast}$ is optimal
    for the cost functional $\mathcal{J}_1(0, \xi; \mathbf{u}, \mathbf{v})$ given $\mathbf{v}^{\ast}$, i.e.
    $$
    \mathcal{J}_1(0, \xi; \mathbf{u}^{\ast}, \mathbf{v}^{\ast}) = \min _{\mathbf{u} \in \mathcal{U}}\mathcal{J}_1(0, \xi; \mathbf{u}, \mathbf{v}^{\ast}).
    $$
    For any admissible $\mathbf{w}\in\mathcal{U}$ and $\lambda>0$,
    $$
    \mathcal{J}_1(0,\xi;\mathbf{u}^{\ast}+\lambda \mathbf{w},\mathbf{v}^{\ast})-\mathcal{J}_1(0,\xi; \mathbf{u}^{\ast},\mathbf{v}^{\ast})\geq 0,
    $$
    $$
    \mathcal{J}_1(0,\xi;\mathbf{u}^{*}-\lambda \mathbf{w},\mathbf{v}^{\ast})-\mathcal{J}_1(0,\xi; \mathbf{u}^{\ast},\mathbf{v}^{\ast})\geq 0.
    $$
    From the $G\hat{a}teaux$ derivative definition:
   {\small \begin{equation}
\begin{split}
\big\langle &\mathcal{J}'_1(0,\xi; \mathbf{u}^{\ast},\mathbf{v}^{\ast}),\mathbf{w} \big\rangle
\\ =& \lim_{\lambda\to 0^{+}} \frac{
    \mathcal{J}_1(0,\xi; \mathbf{u}^{\ast}+\lambda \mathbf{w},\mathbf{v}^{\ast})
    - \mathcal{J}_1(0,\xi; \mathbf{u}^{\ast},\mathbf{v}^{\ast})
}{\lambda} \\
 \geq& 0, \\[2ex]
\big\langle &\mathcal{J}'_1(0,\xi; \mathbf{u}^{\ast},\mathbf{v}^{\ast}), -\mathbf{w} \big\rangle
\\=& \lim_{\lambda\to 0^{+}} \frac{
    \mathcal{J}_1(0,\xi; \mathbf{u}^{\ast}-\lambda\mathbf{w},\mathbf{v}^{\ast})
    - \mathcal{J}_1(0,\xi; \mathbf{u}^{*},\mathbf{v}^{\ast})
}{\lambda} \\
 \geq & 0.
\end{split}
\end{equation}}
    By linearity:
    $$
    0 \leq \langle \mathcal{J}'_1(0,\xi; \mathbf{u}^{\ast},\mathbf{v}^{\ast}),\mathbf{w} \rangle = -\langle \mathcal{J}'_1(0,\xi; \mathbf{u}^{\ast},\mathbf{v}^{\ast}), -\mathbf{w}\rangle \leq 0,
    $$
    which yields \eqref{eq:3.11}.

    Similarly for player 2:
    $$
    \langle \mathcal{J}_2'(0,\xi; \mathbf{u}^{\ast},\mathbf{v}^{\ast}), \mathbf{z} \rangle = 0, \quad \forall \mathbf{z} \in \mathcal{V}.
    $$
    The proof is complete.
\end{proof}

\section{Stochastic Hamiltonian Systems and Stochastic Riccati equations }
	
This section will present two Hamiltonian systems for discrete-time stochastic non-zero-sum difference games involving two players, aiming to derive an explicit expression for the open-loop Nash equilibrium point. The stochastic Hamiltonian systems, governed by stability conditions, result in fully coupled forward-backward stochastic difference equations, which are particularly challenging to solve. This complexity arises from the interdependence of the forward and backward dynamics, necessitating a more sophisticated approach to derive solutions effectively. To address this, we employ stochastic Riccati difference equations to decouple the systems, allowing us to derive feedback representations of the Nash equilibrium points. This approach enhances the efficiency of solving discrete-time games. Next, let's start with an introduction to the stochastic Hamiltonian system.

\subsection{Stationary Conditions and Hamiltonian System}

We now introduce the Hamiltonian functions associated with discrete-time stochastic difference non-zero-sum games. For any $ x \in L_{\mathbb{F}}^2(\overline{\mathcal{T}}; \mathbb{R}^n) $, $ y_1, y_2 \in L_{\mathbb{F}}^2(\overline{\mathcal{T}}; \mathbb{R}^n) $, $ u \in \mathcal{U} $, and $ v \in \mathcal{V} $, for each $ k \in \mathcal{T} $, the Hamiltonian functions for player 1 and player 2 are defined, respectively, as

\begin{equation}\label{eq:4.1}
	\begin{split}
		H_1(k, x, u, v, y_1) &:= \mathbb{E}\bigg[\langle y_{1,k+1}, A_k x_k + B_k u_k + C_k v_k \rangle \\
		&\quad + \langle y_{1,k+1} \omega_k, D_k x_k + E_k u_k + F_k v_k \rangle \bigm| \mathcal{F}_{k-1}\bigg] \\
		&\quad + \frac{1}{2} \langle Q_k x_k, x_k \rangle + \frac{1}{2} \langle R_k u_k, u_k \rangle \\
		&\quad + \langle L_k^{\top} u_k, x_k \rangle + \langle q_k, x_k \rangle + \langle \rho_k, u_k \rangle.
	\end{split}
\end{equation}
and
\begin{equation}\label{eq:4.2}
	\begin{split}
		H_2(k, x, u, v, y_2) &:= \mathbb{E}\bigg[\langle y_{2,k+1}, A_k x_k + B_k u_k + C_k v_k \rangle \\
		&\quad + \langle y_{2,k+1} \omega_k, D_k x_k + E_k u_k + F_k v_k \rangle \bigm| \mathcal{F}_{k-1}\bigg] \\
		&\quad + \frac{1}{2} \langle P_k x_k, x_k \rangle + \frac{1}{2} \langle S_k v_k, v_k \rangle \\
		&\quad + \langle M_k^{\top} v_k, x_k \rangle + \langle p_k, x_k \rangle + \langle \sigma_k, v_k \rangle.
	\end{split}
\end{equation}

For player 1, we denote the partial derivatives of $ H_1 $ with respect to the state $ x $ and control $ u $ as $ \partial_x H_1(k, x, u, v, y_1) $ and $ \partial_u H_1(k, x, u, v, y_1) $, respectively. Similarly, for player 2, $ \partial_x H_2(k, x, u, v, y_2) $ and $ \partial_v H_2(k, x, u, v, y_2) $ represent the partial derivatives of $ H_2 $ with respect to $ x $ and $ v $.

The adjoint processes corresponding to an admissible triple $ (x, \mathbf{u}, \mathbf{v}) $ for players 1 and 2 are denoted by $ y_1 = \{y_{1,k}\}_{k=0}^{N} $ and $ y_2 = \{y_{2,k}\}_{k=0}^{N} $, respectively. These processes satisfy the following backward stochastic difference equations (BS$\Delta$E):

\begin{equation}\label{eq:4.3}
\begin{split}
&\left\{
\begin{aligned}
y_{1,k} &= \partial_x H_1(k, x, u, v, y_1) \\
&= A_k^\top \mathbb{E}[y_{1,k+1} \mid \mathcal{F}_{k-1}] + D_k^\top \mathbb{E}[y_{1,k+1} \omega_k \mid \mathcal{F}_{k-1}] \\
&\quad + Q_k x_k + L_k^\top u_k + q_k, \\
y_{1,N} &= G_N x_N + g_N, \quad k \in \mathcal{T},
\end{aligned}
\right.
\end{split}
\end{equation}

\begin{equation}\label{eq:4.4}
\begin{split}
&\left\{
\begin{aligned}
y_{2,k} &= \partial_x H_2(k, x, u, v, y_2) \\
&= A_k^\top \mathbb{E}[y_{2,k+1} \mid \mathcal{F}_{k-1}] + D_k^\top \mathbb{E}[y_{2,k+1} \omega_k \mid \mathcal{F}_{k-1}] \\
&\quad + P_k x_k + M_k^\top v_k + p_k, \\
y_{2,N} &= H_N x_N + h_N, \quad k \in \mathcal{T}.
\end{aligned}
\right.
\end{split}
\end{equation}
The above BS$\Delta$E, when combined with the state equations, forms a FBS$\Delta$E, known as the stochastic Hamiltonian system, that provides a dual explicit characterization of the Nash equilibrium point $ (\mathbf{u}^*, \mathbf{v}^*) $. This coupling highlights the interdependencies between the players' strategies and their respective state and adjoint processes. By analyzing these equations, we can derive the necessary stationary conditions that the Nash equilibrium must satisfy in the following.

\begin{thm}\label{thm:4.1}
\textbf{(Stationarity Conditions)} Let Assumptions $\ref{ass:2.1}$-$\ref{ass:2.3}$ hold. Then, the necessary and sufficient condition for an admissible control pair $( \mathbf{u}, \mathbf{v}) \in \mathcal{U} \times \mathcal{V}$ to be an open-loop Nash equilibrium point of Problem $\ref{pro:2.2}$, with the state equation given by $ x $, is that the following stationarity conditions are satisfied:
\begin{equation}\label{eq:4.5}
    \partial_u H_1(k, x, u, v, y_1) = 0,
\end{equation}
and
\begin{equation}\label{eq:4.6}
    \partial_v H_2(k, x, u, v, y_2) = 0.
\end{equation}
That is, the following conditions must hold for all $k \in \mathcal{T}$:
\begin{equation}\label{eq:4.7}
\begin{split}
&B_k^\top \mathbb{E}[y_{1, k+1} \mid \mathcal{F}_{k-1}] + E_k^\top \mathbb{E}[y_{1, k+1} \omega_k \mid \mathcal{F}_{k-1}] \\
&+ L_k x_k + R_k u_k + \rho_k = 0,
\end{split}
\end{equation}
and
\begin{equation}\label{eq:4.8}
\begin{split}
&C_k^\top \mathbb{E}[y_{2, k+1} \mid \mathcal{F}_{k-1}] + F_k^\top \mathbb{E}[y_{2, k+1} \omega_k \mid \mathcal{F}_{k-1}] \\
&+ M_k x_k + S_k v_k + \theta_k = 0.
\end{split}
\end{equation}
Here, $ y_1 = \{y_{1,k}\}_{k=0}^{N} \in L_{\mathbb{F}}^2(\overline{\mathcal{T}}; \mathbb{R}^n) $ and $ y_2 = \{y_{2,k}\}_{k=0}^{N} \in L_{\mathbb{F}}^2(\overline{\mathcal{T}}; \mathbb{R}^n) $, as defined by equations \eqref{eq:4.3} and \eqref{eq:4.4}, are the solutions to the adjoint equations corresponding to the admissible 3-tuple $ (x, \mathbf{u}, \mathbf{v}) $.
\end{thm}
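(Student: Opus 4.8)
The plan is to combine the variational characterization of Theorem~\ref{thm:3.3} with the explicit Gâteaux-derivative formula of Lemma~\ref{lem:3.1}, and then to rewrite that formula through a discrete duality (summation-by-parts) argument that pairs the variational state process $x^{(\mathbf{w},\mathbf{0})}$ against the adjoint process $y_1$. By Theorem~\ref{thm:3.3}, $(\mathbf{u},\mathbf{v})$ is a Nash equilibrium point if and only if $\langle \mathcal{J}_1'(0,\xi;\mathbf{u},\mathbf{v}),\mathbf{w}\rangle = 0$ for all $\mathbf{w}\in\mathcal{U}$ and $\langle\mathcal{J}_2'(0,\xi;\mathbf{u},\mathbf{v}),\mathbf{z}\rangle=0$ for all $\mathbf{z}\in\mathcal{V}$. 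Hence it suffices to show that \eqref{eq:3.1} can be recast as $\langle\mathcal{J}_1'(0,\xi;\mathbf{u},\mathbf{v}),\mathbf{w}\rangle = \mathbb{E}\sum_{k=0}^{N-1}\langle \partial_u H_1(k,x,u,v,y_1),w_k\rangle$, and symmetrically for player~2; the stationarity conditions \eqref{eq:4.7}--\eqref{eq:4.8} then follow from the arbitrariness of $\mathbf{w}$ and $\mathbf{z}$.

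The central computation is the discrete duality relation. First I would form the one-step increment $\mathbb{E}[\langle y_{1,k+1}, x^{(\mathbf{w},\mathbf{0})}_{k+1}\rangle - \langle y_{1,k}, x^{(\mathbf{w},\mathbf{0})}_k\rangle]$, substitute the forward dynamics \eqref{eq:3.6} for $x^{(\mathbf{w},\mathbf{0})}_{k+1}$ and the backward dynamics \eqref{eq:4.3} for $y_{1,k}$, and then collapse the conditional expectations. The key point is that $A_k,B_k,D_k,E_k$, $x^{(\mathbf{w},\mathbf{0})}_k$, $w_k$, $x_k$, $u_k$ are all $\mathcal{F}_{k-1}$-measurable, so the tower property lets me replace $y_{1,k+1}$ by $\mathbb{E}[y_{1,k+1}\mid\mathcal{F}_{k-1}]$ in the drift pairings and $y_{1,k+1}\omega_k$ by $\mathbb{E}[y_{1,k+1}\omega_k\mid\mathcal{F}_{k-1}]$ in the noise pairings; the normalization in \eqref{eq:martingale_diff} is what makes these two channels decouple cleanly. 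After cancellation the increment reduces to $\mathbb{E}[\langle B_k^\top\mathbb{E}[y_{1,k+1}\mid\mathcal{F}_{k-1}] + E_k^\top\mathbb{E}[y_{1,k+1}\omega_k\mid\mathcal{F}_{k-1}], w_k\rangle - \langle Q_k x_k + L_k^\top u_k + q_k, x^{(\mathbf{w},\mathbf{0})}_k\rangle]$. Summing over $k$ telescopes the left-hand side to $\mathbb{E}[\langle y_{1,N}, x^{(\mathbf{w},\mathbf{0})}_N\rangle - \langle y_{1,0}, x^{(\mathbf{w},\mathbf{0})}_0\rangle]$; the terminal condition $y_{1,N}=G_N x_N + g_N$ and the initial condition $x^{(\mathbf{w},\mathbf{0})}_0 = 0$ dispose of the two boundary terms.

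Next I would substitute this duality identity into \eqref{eq:3.1}. By construction the terms $\langle G_N x_N + g_N, x^{(\mathbf{w},\mathbf{0})}_N\rangle$ and $\langle Q_k x_k + L_k^\top u_k + q_k, x^{(\mathbf{w},\mathbf{0})}_k\rangle$ appearing in the Gâteaux derivative are exactly those produced by the duality, so they cancel and what remains is $\langle\mathcal{J}_1'(0,\xi;\mathbf{u},\mathbf{v}),\mathbf{w}\rangle = \mathbb{E}\sum_{k=0}^{N-1}\langle B_k^\top\mathbb{E}[y_{1,k+1}\mid\mathcal{F}_{k-1}] + E_k^\top\mathbb{E}[y_{1,k+1}\omega_k\mid\mathcal{F}_{k-1}] + L_k x_k + R_k u_k + \rho_k, w_k\rangle$, which is precisely $\mathbb{E}\sum_{k=0}^{N-1}\langle\partial_u H_1(k,x,u,v,y_1),w_k\rangle$. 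Finally, since the integrand $\Phi_k := B_k^\top\mathbb{E}[y_{1,k+1}\mid\mathcal{F}_{k-1}] + E_k^\top\mathbb{E}[y_{1,k+1}\omega_k\mid\mathcal{F}_{k-1}] + L_k x_k + R_k u_k + \rho_k$ is $\mathcal{F}_{k-1}$-measurable, I would invoke the fundamental lemma of the calculus of variations: choosing $\mathbf{w}$ supported at a single index and equal to a truncation of $\Phi_k$ forces $\mathbb{E}|\Phi_k|^2 = 0$, hence $\Phi_k = 0$ almost surely for each $k$, giving \eqref{eq:4.7}. Repeating the argument verbatim with $y_2$, $\mathbf{z}\in\mathcal{V}$ and the coefficients $(C_k,F_k,M_k,S_k,\theta_k)$ yields \eqref{eq:4.8}.

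I expect the main obstacle to be the duality computation itself. One must track the two separate conditional-expectation channels (the drift pairing carried by $\mathbb{E}[\,\cdot\mid\mathcal{F}_{k-1}]$ and the noise pairing carried by $\mathbb{E}[\,\cdot\,\omega_k\mid\mathcal{F}_{k-1}]$) and verify that, after using \eqref{eq:martingale_diff} together with the $\mathcal{F}_{k-1}$-measurability of all coefficients, the ``state'' contributions carried by $x^{(\mathbf{w},\mathbf{0})}_k$ match \emph{exactly} those appearing in \eqref{eq:3.1}, so that full cancellation occurs and only the control pairing survives. This balancing is the discrete-time analogue of the Itô-formula cancellation used in continuous-time maximum principles, and it is precisely the place where the random, $\mathcal{F}_{k-1}$-adapted nature of the coefficients must be handled with care.
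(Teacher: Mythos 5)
Your proposal is correct and follows essentially the same route as the paper: the same discrete summation-by-parts duality between $x^{(\mathbf{w},\mathbf{0})}$ and $y_1$ (the paper's display \eqref{eq:4.10-revised}), substitution into the Gâteaux derivative \eqref{eq:3.1}, and then Theorem~\ref{thm:3.3} together with the arbitrariness of $\mathbf{w}$ and $\mathbf{z}$ to obtain \eqref{eq:4.7}--\eqref{eq:4.8} in both directions. Your treatment of the final localization step (truncated test controls supported at a single index to force $\Phi_k=0$ a.s.) is in fact slightly more explicit than the paper's appeal to ``arbitrariness of $\mathbf{w}^*$.''
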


\begin{proof}
$\mathbf{Necessary ~Condition:}$~
Suppose that the admissible control pair $( \mathbf{u}, \mathbf{v})$ is an open-loop Nash-equilibrium point. Under Assumptions $\ref{ass:2.1}$-$\ref{ass:2.3}$, the BS$\Delta$E \eqref{eq:4.3} admit a unique solution $y_1 \in L_{\mathbb{F}}^2(\overline{\mathcal{T}}; \mathbb{R}^n)$. For any admissible control $\mathbf{w}^* \in \mathcal{U}$, consider the variational process $x^{(\mathbf{w}^*,0)}$ governed by \eqref{eq:2.8}.

Summing $\langle y_{1,k+1}, x^{(\mathbf{w}^*,0)}_{k+1} \rangle - \langle y_{1,k}, x^{(\mathbf{w}^*,0)}_{k} \rangle$ from $k=0$ to $N-1$ and taking expectations:
\begin{equation}\label{eq:4.9-revised}
\begin{split}
&\mathbb{E}\big[\langle y_{1,N}, x^{(\mathbf{w}^*,0)}_{N} \rangle\big]
\\&= \mathbb{E}\sum_{k=0}^{N-1} \Big[ \langle A_k^\top y_{1,k+1} + D_k^\top y_{1,k+1}\omega_k - y_{1,k}, x^{(\mathbf{w}^*,0)}_{k} \rangle \\
&\quad + \langle B_k^\top y_{1,k+1} + E_k^\top y_{1,k+1}\omega_k, w^*_k \rangle \Big] \\
&= \mathbb{E}\sum_{k=0}^{N-1} \Big[ -\langle Q_k x_k + L_k^\top u_k + q_k, x^{(\mathbf{w}^*,0)}_{k} \rangle \\
&\quad + \langle B_k^\top y_{1,k+1} + E_k^\top y_{1,k+1}\omega_k, w^*_k \rangle \Big].
\end{split}
\end{equation}

Combining with the cost functional's structure:
\begin{eqnarray}\label{eq:4.10-revised}
    \begin{split}
        &\mathbb{E}\big[\langle G_N x_N + g_N, x^{(\mathbf{w}^*,0)}_N \rangle\big] + \mathbb{E}\sum_{k=0}^{N-1} \langle Q_k x_k + L_k^\top u_k + q_k, x^{(\mathbf{w}^*,0)}_k \rangle \\
        &= \mathbb{E}\sum_{k=0}^{N-1} \Big\langle B_k^\top \mathbb{E}[y_{1,k+1} \mid \mathcal{F}_{k-1}] + E_k^\top \mathbb{E}[y_{1,k+1}\omega_k \mid \mathcal{F}_{k-1}], w^*_k \Big\rangle.
    \end{split}
\end{eqnarray}
Here, we utilized  $\mathbf{w}^*$ is adapted: $w^*_k \in \mathcal{F}_{k-1}$.

Substituting  \eqref {eq:4.10-revised} into  G$\hat{a}$teaux derivative \eqref{eq:3.1} in Lemma \ref{lem:3.1}  and using  Theorem \ref {thm:3.3}, we get that
\begin{equation}\label{eq:4.11-revised}
	\begin{split}
		\langle \mathcal{J}_1'&(0, \xi; \mathbf{u}, \mathbf{v}), \mathbf{w}^* \rangle
		\\=& \mathbb{E}\sum_{k=0}^{N-1} \bigg\langle
		B_k^\top \mathbb{E}[y_{1,k+1} \mid \mathcal{F}_{k-1}] + E_k^\top \mathbb{E}[y_{1,k+1}\omega_k \mid \mathcal{F}_{k-1}] \\
		&+ L_k x_k + R_k u_k + \rho_k,\ w^*_k \bigg\rangle = 0.
	\end{split}
\end{equation}
The arbitrariness of $\mathbf{w}^*$ implies \eqref{eq:4.7}. Similarly, \eqref{eq:4.8} follows for player 2. Necessity is proved.

\textbf{Sufficient Condition:}
Let $(x, \mathbf{u}, \mathbf{v})$ be an admissible 3-tuple for Problem~\ref{pro:2.2}, and let $y_1 = \{y_{1,k}\}_{k=0}^N$ and $y_2 = \{y_{2,k}\}_{k=0}^N$ denote the solutions of the adjoint equations~\eqref{eq:4.3} and~\eqref{eq:4.4}, respectively. Suppose the stationarity conditions~\eqref{eq:4.7}-\eqref{eq:4.8} hold.

Substituting~\eqref{eq:4.7} into the G$\hat{a}$teaux derivative expression~\eqref{eq:4.11-revised}, we obtain:
\begin{equation}
\langle \mathcal{J}_1'(0, \xi; \mathbf{u}, \mathbf{v}), \mathbf{w}^* \rangle = 0, \quad \forall \mathbf{w}^* \in \mathcal{U}.
\end{equation}
Similarly for player 2, substituting~\eqref{eq:4.8} into the corresponding derivative yields:
\begin{equation}
\langle \mathcal{J}_2'(0, \xi; \mathbf{u}, \mathbf{v}), \mathbf{z}^* \rangle = 0, \quad \forall \mathbf{z}^* \in \mathcal{V}.
\end{equation}

By Theorem~\ref{thm:3.3}, these vanishing G$\hat{a}$teaux derivatives imply that $(\mathbf{u}, \mathbf{v})$ simultaneously minimizes $\mathcal{J}_1$ and $\mathcal{J}_2$ under mutual optimality. Therefore, $(x, \mathbf{u}, \mathbf{v})$ constitutes an open-loop Nash equilibrium for Problem~\ref{pro:2.2}. The proof is complete.
\end{proof}

\begin{rmk}\label{rmk:4.2}
    Under Assumptions \ref{ass:2.1}-\ref{ass:2.3}, any admissible 3-tuple $(x, \mathbf{u}, \mathbf{v})$ satisfies:
    \begin{equation} \label{eq:4.14}
    	\begin{split}
    		u_k =& -R_{k}^{-1}\big( B_{k}^{\top}\mathbb{E}\big[y_{1,k+1} \big| \mathcal{F}_{k-1}\big] + E_{k}^{\top}\mathbb{E}\big[y_{1,k+1}\omega_k \big| \mathcal{F}_{k-1}\big] \\&+ L_k x_k + \rho_k \big)
    	\end{split}
    \end{equation}
   \begin{equation} \label{eq:4.15}
   	\begin{split}
        v_k =& -S_k^{-1}\big( C_k^{\top}\mathbb{E}\big[y_{2,k+1} \big| \mathcal{F}_{k-1}\big] + F_k^{\top}\mathbb{E}\big[y_{2,k+1}\omega_k \big| \mathcal{F}_{k-1}\big] \\&+ M_k x_k + \theta_k \big)
    \end{split}
   \end{equation}
    with the first-order conditions equivalently expressed as:
    \begin{equation} \label{eq:4.16}
        \mathbb{E}\big[ B_k^{\top} y_{1,k+1} + E_k^{\top} y_{1,k+1}\omega_k \big| \mathcal{F}_{k-1} \big] + L_k x_k + R_k u_k + \rho_k = 0  \end{equation}
         \begin{equation} \label{eq:4.17}
         	\mathbb{E}\big[ C_k^{\top} y_{2,k+1} + F_k^{\top} y_{2,k+1}\omega_k \big| \mathcal{F}_{k-1} \big] + M_k x_k + S_k v_k + \theta_k = 0.
    \end{equation}
    for all $k \in \mathcal{T}$.
\end{rmk}

\begin{equation}\label{eq:4.19}
	\left\{
	\begin{aligned}
		x_{k+1} &= A_{k}x_{k} + B_{k}u_{k} + C_{k}v_{k} + b_{k} + \big(D_{k}x_{k} + E_{k}u_{k} \\
		&\quad + F_{k}v_{k} + \sigma_{k}\big)\omega_{k}, \\
		y_{1, k} &= A_{k}^{\top}\mathbb{E}[y_{1, k+1} \mid \mathcal{F}_{k-1}] + D_{k}^{\top}\mathbb{E}[y_{1, k+1}\omega_{k} \mid \mathcal{F}_{k-1}] \\
		&\quad + Q_{k}x_{k} + L_{k}^{\top}u_{k} + q_{k}, \\
		x_{0} &= \xi \in \mathbb{R}^n, \quad y_{1, N} = G_{N}x_{N} + g_{N}, \\
		0 &= B_{k}^{\top}\mathbb{E}[y_{1, k+1} \mid \mathcal{F}_{k-1}] + E_{k}^{\top}\mathbb{E}[y_{1, k+1}\omega_{k} \mid \mathcal{F}_{k-1}] \\
		&\quad + L_{k}x_{k} + R_{k}u_{k} + \rho_{k}, \quad k \in \mathcal{T}.
	\end{aligned}
	\right.
\end{equation}

\begin{equation}\label{eq:4.20}
	\left\{
	\begin{aligned}
		x_{k+1} &= A_{k}x_{k} + B_{k}u_{k} + C_{k}v_{k} + b_{k} + \big(D_{k}x_{k} + E_{k}u_{k} \\
		&\quad + F_{k}v_{k} + \sigma_{k}\big)\omega_{k}, \\
		y_{2,k} &= A_{k}^{\top}\mathbb{E}\big[y_{2,k+1} \mid \mathcal{F}_{k-1}\big] + D_{k}^{\top}\mathbb{E}\big[y_{2,k+1}\omega_{k} \mid \mathcal{F}_{k-1}\big] \\
		&\quad + P_{k}x_{k} + M_{k}^{\top}v_{k} + p_{k}, \\
		x_{0} &= \xi \in \mathbb{R}^n, \quad
		y_{2,N} = H_{N}x_{N} + h_{N}, \\
		0 &= C_{k}^{\top}\mathbb{E}\big[y_{2,k+1} \mid \mathcal{F}_{k-1}\big] + F_{k}^{\top}\mathbb{E}\big[y_{2,k+1}\omega_{k} \mid \mathcal{F}_{k-1}\big] \\
		&\quad + M_{k}x_{k} + S_{k}v_{k} + \theta_{k}, \quad k \in \mathcal{T}.
	\end{aligned}
	\right.
\end{equation}

The Hamiltonian systems \eqref{eq:4.19} and \eqref{eq:4.20} form two fully coupled forward-backward stochastic difference equations (FBS$\Delta$Es). Under Assumptions \ref{ass:2.1}--\ref{ass:2.3} and based on Theorem~\ref{thm:3.3}, any Nash equilibrium $(\mathbf{u}, \mathbf{v})$ must correspond to a solution
$$
(\mathbf{u}, x, y_1) \in \mathcal{U} \times L_{\mathbb{F}}^2(\overline{\mathcal{T}}; \mathbb{R}^n) \times L_{\mathbb{F}}^2(\overline{\mathcal{T}}; \mathbb{R}^n)
$$
of \eqref{eq:4.19}, and
$$
(\mathbf{v}, x, y_2) \in \mathcal{V} \times L_{\mathbb{F}}^2(\overline{\mathcal{T}}; \mathbb{R}^n) \times L_{\mathbb{F}}^2(\overline{\mathcal{T}}; \mathbb{R}^n)
$$
of \eqref{eq:4.20}, respectively. Conversely, if the Hamiltonian systems \eqref{eq:4.19} and \eqref{eq:4.20} admit a common state process $x$ and control processes $(\mathbf{u}, \mathbf{v})$ satisfying both systems, then $(\mathbf{u}, \mathbf{v})$ constitutes a Nash equilibrium.

\subsection{Riccati Equations and Feedback Representation of Optimal Controls}
	
Since the Hamiltonian systems \eqref{eq:4.19} and \eqref{eq:4.20} are fully coupled FBS$\Delta$E, solving them directly poses significant analytical challenges. To decouple these systems and derive the feedback representation of the open-loop Nash equilibrium, we introduce an appropriate set of Riccati-type difference equations to simplify the analysis. In the following, we formally derive the associated Riccati equations, using the method of undetermined coefficients as a guiding technique. To this end, inspired by the terminal conditions of systems \eqref{eq:4.19} and \eqref{eq:4.20}, we postulate specific affine relationships between the state process $x$  associated with the open-loop Nash equilibrium pair
	$(\mathbf u, \mathbf v)$ and the corresponding adjoint processes \( y_1 = \{y_{1, k}\}_{k=0}^{N} \) and \( y_2 = \{y_{2, k}\}_{k=0}^{N} \) :
	\begin{equation}\label{eq:4.21}
		y_{1,k}=T^{1}_{k}x_{k}+\phi^{1}_{k},~~~~~~~~y_{2,k}=T^{2}_{k}x_{k}+\phi^{2}_{k},
	\end{equation}
	where  $T^{1}:=\bigl\{T^{1}_{k}~|~k\in\mathcal{T}\bigr\} \in L_{\mathbb{F}}^{\infty}(\overline{\mathcal{T}}; \mathbb{R}^{n\times n})$
	~and $T^{2}:=\bigl\{T^{2}_{k}~|~k\in \mathcal{T}\bigr\}\in L_{\mathbb{F}}^{\infty}(\overline{\mathcal{T}}; \mathbb{R}^{n\times n})$ satisfy the terminal conditions
	\begin{equation}\label{eq:4.22}
		\begin{array}{lll}
			T^{1}_{N} = G_{N},~~~~T^{2}_{N} = H_{N},
		\end{array}
	\end{equation}
	and $\phi^{1}:=\bigl\{\phi^{1}_{k}~|~k\in\mathcal{T}\bigr\}\in L_{\mathbb{F}}^2(\overline{\mathcal{T}}; \mathbb{R}^{n})$ and $\phi^{2}:=\bigl\{\phi^{2}_{k}~|~k\in\mathcal{T}\bigr\}\in L_{\mathbb{F}}^2(\overline{\mathcal{T}}; \mathbb{R}^{n})$ satisfying following BS$\Delta$Es:
	\begin{equation}\label{eq:4.23}
		\left\{\begin{array}{lll}
			\phi^{1}_{k} = f^{1}_{k+1}+g^{1}_{k+1}\mathbb{E}[\phi^{1}_{k+1}\mid \mathcal {F}_{k-1}]+h^{1}_{k+1}\mathbb{E}[\phi^{1}_{k+1}\omega_{k}\mid \mathcal {F}_{k-1}]
			\\
			\phi^{1}_{N} = g_{N},
		\end{array}
		\right.
	\end{equation}
	and
	\begin{equation}\label{eq:4.24}
		\left\{\begin{array}{lll}
			\phi^{2}_{k} = f^{2}_{k+1}+g^{2}_{k+1}\mathbb{E}[\phi^{2}_{k+1}\mid \mathcal {F}_{k-1}]+h^{2}_{k+1}\mathbb{E}[\phi^{2}_{k+1}\omega_{k}\mid \mathcal {F}_{k-1}]
			\\
			\phi^{2}_{N} = h_{N},
		\end{array}
		\right.
	\end{equation}
	\normalsize{where $f^{1}_{k+1},f^{2}_{k+1},g^{1}_{k+1},g^{2}_{k+1}$, $h^{1}_{k+1}$ and $h^{2}_{k+1}$ are 
 stochastic processes.~Furthermore, using the notations introduced in Appendix A, the Hamiltonian systems \eqref{eq:4.19} and \eqref{eq:4.20} can be equivalently rewritten as
 }

\begin{equation}\label{eq:4.25}
	\left\{
	\begin{aligned}
		x_{k+1} &= {A}_{k}x_{k} + \Lambda^{2}_{k}\pi_{k} + b_{k} + \left({D}_{k}x_{k} + \Lambda^{4}_{k}\pi_{k} + \sigma_{k}\right)\omega_{k}, \\
		Y_{k} &= \tilde{A}_{k}^{\top}\mathbb{E}\big[Y_{k+1} \mid \mathcal{F}_{k-1}\big] + \tilde{D}_{k}^{\top}\mathbb{E}\big[Y_{k+1}\omega_{k} \mid \mathcal{F}_{k-1}\big] \\
		&\quad + \Lambda^{5}_{k}x_{k} + \Lambda_{k}^{6\top}\pi_{k} + \lambda^{1}_{k}, \\
		x_{0} &= \xi \in \mathbb{R}^n,
		Y_{N} = Gx_{N} + g, \\
		0 &= \Lambda^{1\top}_{k}\mathbb{E}\big[Y_{k+1} \mid \mathcal{F}_{k-1}\big] + \Lambda^{3\top}_{k}\mathbb{E}\big[Y_{k+1}\omega_{k} \mid \mathcal{F}_{k-1}\big] \\
		&\quad + \Lambda^{6}_{k}\boldsymbol{I}_{n}x_{k} + \Lambda^{7}_{k}\pi_{k} + \lambda^{2}_{k}, \quad k \in \mathcal{T}.
	\end{aligned}
	\right.
\end{equation}

\begin{rmk}\label{rmk:4.30}
	The unified Hamiltonian system \eqref{eq:4.25} is an equivalent reformulation of the stationarity conditions in Theorem~\ref{thm:4.1}. It integrates the dynamics, adjoint equations, and equilibrium conditions of both players into a single compact form, where the coefficients \( \Lambda_k^i \) and \( \lambda_k^j ,i=1\cdots7,~j=1,2\) are appropriately defined to recover the original conditions \eqref{eq:4.7} and \eqref{eq:4.8}. This unified system thus characterizes the same open-loop Nash equilibrium.
\end{rmk}
	In view of \eqref{eq:4.21}, the relationship between the state process \( x_k \), associated with the open-loop Nash equilibrium point \( \pi_k \), and the corresponding adjoint process \( Y_k \) can be expressed as follows:
	
	\begin{equation}\label{eq:4.26}
		Y_k = T_k x_k + \phi_k,
	\end{equation}
	where \( T := \{ T_k ~|~ k \in \mathcal{T} \} \in L_{\mathbb{F}}^{\infty}(\overline{\mathcal{T}}; \mathbb{R}^{2n \times n}) \) satisfies the condition
	\begin{equation}\label{eq:4.27}
		T_N = G.
	\end{equation}
	Additionally, \( \phi := \{ \phi_k ~|~ k \in \mathcal{T} \} \in L_{\mathbb{F}}^2(\overline{\mathcal{T}}; \mathbb{R}^{2n}) \) satisfies the following BS$\Delta$E:
	\begin{equation}\label{eq:4.28}
		\left\{
		\begin{array}{lll}
			\phi_k = f_{k+1} + g_{k+1} \mathbb{E}[\phi_{k+1} \mid \mathcal{F}_{k-1}] + h_{k+1} \mathbb{E}[\phi_{k+1} \omega_k \mid \mathcal{F}_{k-1}], \\
			\phi_N = g.
		\end{array}
		\right.
	\end{equation}
	Considering the adjoint equations \( Y = \{Y_{k}\}_{k=0}^{N} \in L_{\mathbb{F}}^2(\overline{\mathcal{T}}; \mathbb{R}^{2n}) \) in \eqref{eq:4.25} , the relationship \eqref{eq:4.26}, the state equation \eqref{eq:2.1}, and using the notations in Appendix B, we deduce that
	
	\begin{eqnarray}\label{eq:4.30}
			\begin{split}
				&T_{k}x_{k}+\phi_{k}
				\\=&Y_{k}
				\\
				=&\tilde{A}_{k}^{\top}\mathbb{E}[Y_{k+1} \mid \mathcal{F}_{k-1}] + \tilde{D}_{k}^{\top}\mathbb{E}[Y_{k+1}\omega_{k} \mid \mathcal{F}_{k-1}] +\Lambda^{5}_{k}x_{k} \\&+ \Lambda_{k}^{6 \top}\pi_{k}+\lambda^{1}_{k}
				\\
				=&\tilde{A}_{k}^{\top}\mathbb{E}[	T_{k+1}x_{k+1}+\phi_{k+1} \mid \mathcal{F}_{k-1}] + \tilde{D}_{k}^{\top}\mathbb{E}[\big(	T_{k+1}x_{k+1}+\phi_{k+1}\big)\\&\times\omega_{k} \mid \mathcal{F}_{k-1}] +\Lambda^{5}_{k}x_{k} + \Lambda_{k}^{6 \top}\pi_{k}+\lambda^{1}_{k}
				\\=&\tilde{A}_{k}^{\top}\mathbb{E}[	T_{k+1}\big(A_{k}x_{k}+\Lambda^{2}_{k}\pi_{k}+b_{k}+\left(D_{k}x_{k}+\Lambda^{4}_{k}\pi_{k}+\sigma_{k}\right)\\&\times\omega_{k}\big)+\phi_{k+1} \mid \mathcal{F}_{k-1}] + \tilde{D}_{k}^{\top}\mathbb{E}[\big(	T_{k+1}\big(A_{k}x_{k}+\Lambda^{2}_{k}\pi_{k}+b_{k}\\&+\left(D_{k}x_{k}+\Lambda^{4}_{k}\pi_{k}+\sigma_{k}\right)\omega_{k}\big)+\phi_{k+1}\big)\omega_{k} \mid \mathcal{F}_{k-1}] +\Lambda^{5}_{k}x_{k} \\&+ \Lambda_{k}^{6 \top}\pi_{k}+\lambda^{1}_{k}
				\\=&\Delta\left(T_{k+1}\right)x_{k}+\mathscr{L}^{\top}\left(T_{k+1}\right)\pi_{k}+\Theta\left(T_{k+1},\phi_{k+1}\right).
			\end{split}
	\end{eqnarray}
	$\!\!\!\!\!$~To elaborate further, using the notations in Appendix B, we substitute the relationship in \eqref{eq:4.26} into the stationarity condition in \eqref{eq:4.25} for the open-loop Nash-Equilibrium
	point $\pi_k$ which yields
	{\begin{equation} \label{eq:4.31}
			\begin{split}
				&0\\=&\Lambda^{1\top}_{k}\mathbb{E}[Y_{k+1} \mid \mathcal{F}_{k-1}] + \Lambda^{3\top}_{k}\mathbb{E}[Y_{ k+1}\omega_{k} \mid \mathcal{F}_{k-1}] +  \Lambda^{6}_{k}\boldsymbol{I}_{n}x_{k} \\&+ \Lambda^{7}_{k}\pi_{k}+\lambda^{2}_{k}
				\\
				=&\Lambda^{1\top}_{k}\mathbb{E}[T_{k+1}x_{k+1}+\phi_{k+1}\mid \mathcal{F}_{k-1}] + \Lambda^{3\top}_{k}\mathbb{E}[\big(T_{k+1}x_{k+1}+\phi_{k+1}\big)\\&\times\omega_{k} \mid \mathcal{F}_{k-1}] + \Lambda^{6}_{k}\boldsymbol{I}_{n}x_{k} + \Lambda^{7}_{k}\pi_{k}+\lambda^{2}_{k}
				\\
				=&\Lambda^{1\top}_{k}\mathbb{E}[T_{k+1}\big(	A_{k}x_{k}+\Lambda^{2}_{k}\pi_{k}+b_{k}+\left(D_{k}x_{k}+\Lambda^{4}_{k}\pi_{k}+\sigma_{k}\right)\omega_{k} \big)\\&+\phi_{k+1}\mid \mathcal{F}_{k-1}] + \Lambda^{3\top}_{k}\mathbb{E}[\big(T_{k+1}\big(	A_{k}x_{k}+\Lambda^{2}_{k}\pi_{k}+b_{k}+\big(D_{k}x_{k}\\&+\Lambda^{4}_{k}\pi_{k}+\sigma_{k}\big)\omega_{k} \big)+\phi_{k+1}\big)\omega_{k} \mid \mathcal{F}_{k-1}] + \Lambda^{6}_{k}\boldsymbol{I}_{n}x_{k} + \Lambda^{7}_{k}\pi_{k}+\lambda^{2}_{k}
				\\
				=&\Gamma\left(T_{k+1}\right)x_{k}+\Upsilon\left(T_{k+1}\right)\pi_{k}+\Phi\left(T_{k+1},\phi_{k+1}\right).
			\end{split}
	\end{equation}}
	\begin{rmk}\label{rmk:4.3}
The Riccati recursion leads to a control representation involving the matrix
\begin{equation}\nonumber
	\begin{split}
		\Upsilon(T&_{k+1}) \\:=& \Lambda^{7}_{k}+\Lambda^{1\top}_{k}\mathbb{E}[T_{k+1} \mid \mathcal{F}_{k-1}]\Lambda^{2}_{k}+\Lambda^{1\top}_{k}\mathbb{E}[T_{k+1}\omega_{k} \mid \mathcal{F}_{k-1}]\Lambda^{4}_{k}\\&+\Lambda^{3\top}_{k}\mathbb{E}[T_{k+1}\omega_{k} \mid \mathcal{F}_{k-1}]\Lambda^{2}_{k}+\Lambda^{3\top}_{k}\mathbb{E}[T_{k+1}\omega^{2}_{k} \mid \mathcal{F}_{k-1}]\Lambda^{4}_{k},
	\end{split}
\end{equation}
whose bounded invertibility is required for computing the optimal control. In the previous, Assumption 2.3 guarantees the uniform convexity of each player's performance index and the strongly regular solvability of the Riccati equation in the single-player optimal control setting. We now provide sufficient conditions ensuring that \(\Upsilon(T_{k+1})\) is indeed invertible.~We impose the following assumptions:

\begin{itemize}
	\item[(i)] \(\Lambda^7_k \succ 0\) for all \(k\), i.e., the control cost matrix is uniformly positive definite for all k $\in\mathcal{T}$;
	\item[(ii)] \(T_N \succeq 0\) and \(T_k\) are generated recursively in a way that preserves positive semi-definiteness;
	\item[(iii)] The matrices \(\Lambda^1_k, \Lambda^2_k, \Lambda^3_k, \Lambda^4_k\) are uniformly bounded and satisfy structural non-degeneracy.
\end{itemize}
Under these assumptions, we can show via backward induction that the Riccati matrices \(T_k\) remain positive semi-definite for all \(k\in\mathcal{T}\). Starting from \(T_N = G \succeq 0\), and noting that the recursive formula maintains non-negativity due to the structure of the difference Riccati equation, we conclude that \(T_{k+1} \succeq 0\) implies $\Upsilon(T_{k+1}) \succ 0$.~Hence, \(\Upsilon(T_{k+1})\) is positive definite and invertible for all \(k\in\mathcal{T}\), which ensures the well-posedness of the feedback representation of the open-loop Nash equilibrium.
\end{rmk}

Under the assumptions in the Remark 4.3, the closed-loop representation of open-loop Nash-Equilibrium point $\pi_k$ can be expressed in terms of state feedback as follows:
	\begin{eqnarray} \label{eq:4.32}
		\begin{array}{lll}
			\pi_{k}=-\Upsilon\left(T_{k+1}\right)^{-1}\bigg[\Gamma\left(T_{k+1}\right)x_{k}+\Phi\left(T_{k+1},\phi_{k+1}\right)\bigg], k\in \mathcal{T}.
		\end{array}
	\end{eqnarray}
	Next,  substituting $\pi_{k}$ in \eqref{eq:4.32} into  \eqref{eq:4.30}, we get that
	{\begin{equation}\label{eq:4.33}
			\begin{array}{lll}
				\begin{split}
					&T_{k}x_{k}+\phi_{k}
					\\=&Y_{k}
					\\
					=&\Delta\left(T_{k+1}\right)x_{k}+\mathscr{L}^{\top}\left(T_{k+1}\right)\pi_{k}+\Theta\left(T_{k+1},\phi_{k+1}\right)
					\\=&\Delta\left(T_{k+1}\right)x_{k}-\mathscr{L}^{\top}\left(T_{k+1}\right)\Upsilon\left(T_{k+1}\right)^{-1}\bigg[\Gamma\left(T_{k+1}\right)x_{k}\\&+\Phi\left(T_{k+1},\phi_{k+1}\right)\bigg]+\Theta\left(T_{k+1},\phi_{k+1}\right)
					\\=&\bigg[\Delta\left(T_{k+1}\right)-\mathscr{L}^{\top}\left(T_{k+1}\right)\Upsilon\left(T_{k+1}\right)^{-1}\Gamma\left(T_{k+1}\right)\bigg]x_{k}\\&+\Theta\left(T_{k+1},\phi_{k+1}\right)-\mathscr{L}^{\top}\left(T_{k+1}\right)\Upsilon\left(T_{k+1}\right)^{-1}\Phi\left(T_{k+1},\phi_{k+1}\right)
				\end{split}
			\end{array}
	\end{equation}}
	$\!\!$By comparing both sides of \eqref{eq:4.33}, we can derive the expressions for $T$, which is the solution to the following difference Riccati equations:
	\small{
	\begin{equation}\label{eq:4.34}
			\left\{\begin{array}{lll}
				\begin{split}
					T_{k}=&\Delta\left(T_{k+1}\right)-\mathscr{L}^{\top}\left(T_{k+1}\right)\Upsilon\left(T_{k+1}\right)^{-1}\Gamma\left(T_{k+1}\right)
					\\
					=&\bigg[\Lambda^{5}_{k}+\tilde{A}_{k}^{\top}\mathbb{E}[T_{k+1} \mid \mathcal{F}_{k-1}]A_{k}+\tilde{A}_{k}^{\top}\mathbb{E}[T_{k+1}\omega_{k} \mid \mathcal{F}_{k-1}]D_{k}\\&+\tilde{D}_{k}^{\top}\mathbb{E}[T_{k+1}\omega_{k} \mid \mathcal{F}_{k-1}]A_{k}+\tilde{D}_{k}^{\top}\mathbb{E}[T_{k+1}\omega_{k}^{2} \mid \mathcal{F}_{k-1}]D_{k}\bigg]
					\\&-\bigg[\Lambda^{6}_{k}+\Lambda^{2\top}_{k}\mathbb{E}[T_{k+1} \mid \mathcal{F}_{k-1}]\tilde{A}_{k}+\Lambda^{4\top}_{k}\mathbb{E}[T_{k+1}\omega_{k} \mid \mathcal{F}_{k-1}]\tilde{A}_{k}\\&+\Lambda^{2\top}_{k}\mathbb{E}[T_{k+1}\omega_{k} \mid \mathcal{F}_{k-1}]\tilde{D}_{k}+\Lambda^{4\top}_{k}\mathbb{E}[T_{k+1}\omega_{k}^{2} \mid \mathcal{F}_{k-1}]\tilde{D}_{k}\bigg]^{\top}\\&\times\bigg[\Lambda^{7}_{k}+\Lambda^{1\top}_{k}\mathbb{E}[T_{k+1} \mid \mathcal{F}_{k-1}]\Lambda^{2}_{k}+\Lambda^{1\top}_{k}\mathbb{E}[T_{k+1}\omega_{k} \mid \mathcal{F}_{k-1}]\Lambda^{4}_{k}\\&+\Lambda^{3\top}_{k}\mathbb{E}[T_{k+1}\omega_{k} \mid \mathcal{F}_{k-1}]\Lambda^{2}_{k}+\Lambda^{3\top}_{k}\mathbb{E}[T_{k+1}\omega^{2}_{k} \mid \mathcal{F}_{k-1}]\Lambda^{4}_{k}\bigg]^{-1}\\&\times\bigg[\Lambda^{6}_{k}\boldsymbol{I}_{n}+\Lambda^{1\top}_{k}\mathbb{E}[T_{k+1} \mid \mathcal{F}_{k-1}]A_{k}+\Lambda^{3\top}_{k}\mathbb{E}[T_{k+1}\omega_{k} \mid \mathcal{F}_{k-1}]\\&\times A_{k}+\Lambda^{1\top}_{k}\mathbb{E}[T_{k+1}\omega_{k} \mid \mathcal{F}_{k-1}]D_{k}+\Lambda^{3\top}_{k}\mathbb{E}[T_{k+1}\omega^{2}_{k} \mid \mathcal{F}_{k-1}]D_{k}\bigg],
					\\
					T_{N}=&G.
				\end{split}
			\end{array}
			\right.
	\end{equation}}
	For any $k\in\mathcal{T}$, $\phi$ satisfies the following BS$\Delta$E:
		\begin{equation}\nonumber
		\left\{\begin{array}{lll}
			\begin{split}
				\phi_{k}=&\Theta\left(T_{k+1},\phi_{k+1}\right)-\mathscr{L}^{\top}\left(T_{k+1}\right)\Upsilon\left(T_{k+1}\right)^{-1}\Phi\left(T_{k+1},\phi_{k+1}\right)
				\\=&-\biggl\{\bigg[\Lambda^{6}_{k}+\Lambda^{2\top}_{k}\mathbb{E}[T_{k+1} \mid \mathcal{F}_{k-1}]\tilde{A}_{k}+\Lambda^{4\top}_{k}\mathbb{E}[T_{k+1}\omega_{k} \mid \mathcal{F}_{k-1}]\tilde{A}_{k}\\&+\Lambda^{2\top}_{k}\mathbb{E}[T_{k+1}\omega_{k} \mid \mathcal{F}_{k-1}]\tilde{D}_{k}+\Lambda^{4\top}_{k}\mathbb{E}[T_{k+1}\omega_{k}^{2} \mid \mathcal{F}_{k-1}]\tilde{D}_{k}\bigg]^{\top}\\&\times\bigg[\Lambda^{7}_{k}+\Lambda^{1\top}_{k}\mathbb{E}[T_{k+1} \mid \mathcal{F}_{k-1}]\Lambda^{2}_{k}+\Lambda^{1\top}_{k}\mathbb{E}[T_{k+1}\omega_{k} \mid \mathcal{F}_{k-1}]\Lambda^{4}_{k}\\&+\Lambda^{3\top}_{k}\mathbb{E}[T_{k+1}\omega_{k} \mid \mathcal{F}_{k-1}]\Lambda^{2}_{k}+\Lambda^{3\top}_{k}\mathbb{E}[T_{k+1}\omega^{2}_{k} \mid \mathcal{F}_{k-1}]\Lambda^{4}_{k}\bigg]^{-1}\\&\times\bigg[\lambda^{2}_{k}+\Lambda^{1}_{k}\big(\mathbb{E}[T_{k+1}\mid \mathcal{F}_{k-1}]b_{k}+\mathbb{E}[T_{k+1}\omega_{k}\mid \mathcal{F}_{k-1}]\sigma_{k}\big)\\&+\Lambda^{3}_{k}\big(\mathbb{E}[T_{k+1}\omega_{k}\mid \mathcal{F}_{k-1}]b_{k}+\mathbb{E}[T_{k+1}\omega_{k}^{2}\mid \mathcal{F}_{k-1}]\sigma_{k}\big)\bigg]\end{split}
			\end{array}
		\right.
	\end{equation}
\begin{equation}\label{eq:4.35}
\left\{\begin{array}{lll}
\begin{split}&-\bigg[\lambda^{1}_{k}+\tilde{A}_{k}\big(\mathbb{E}[T_{k+1}\mid \mathcal{F}_{k-1}]b_{k}+\mathbb{E}[T_{k+1}\omega_{k}\mid \mathcal{F}_{k-1}]\sigma_{k}\big)\\&+\tilde{D}_{k}\big(\mathbb{E}[T_{k+1}\omega_{k}\mid \mathcal{F}_{k-1}]b_{k}+\mathbb{E}[T_{k+1}\omega_{k}^{2}\mid \mathcal{F}_{k-1}]\sigma_{k}\big)\bigg]\biggr\}
				+\biggl\{\tilde{A}_{k}\\&-\bigg[\Lambda^{6}_{k}+\Lambda^{2\top}_{k}\mathbb{E}[T_{k+1} \mid \mathcal{F}_{k-1}]\tilde{A}_{k}+\Lambda^{4\top}_{k}\mathbb{E}[T_{k+1}\omega_{k} \mid \mathcal{F}_{k-1}]\tilde{A}_{k}\\&+\Lambda^{2\top}_{k}\mathbb{E}[T_{k+1}\omega_{k} \mid \mathcal{F}_{k-1}]\tilde{D}_{k}+\Lambda^{4\top}_{k}\mathbb{E}[T_{k+1}\omega_{k}^{2} \mid \mathcal{F}_{k-1}]\tilde{D}_{k}\bigg]^{\top}\\&\times\bigg[\Lambda^{7}_{k}+\Lambda^{1\top}_{k}\mathbb{E}[T_{k+1} \mid \mathcal{F}_{k-1}]\Lambda^{2}_{k}+\Lambda^{1\top}_{k}\mathbb{E}[T_{k+1}\omega_{k} \mid \mathcal{F}_{k-1}]\Lambda^{4}_{k}\\&+\Lambda^{3\top}_{k}\mathbb{E}[T_{k+1}\omega_{k} \mid \mathcal{F}_{k-1}]\Lambda^{2}_{k}+\Lambda^{3\top}_{k}\mathbb{E}[T_{k+1}\omega^{2}_{k} \mid \mathcal{F}_{k-1}]\Lambda^{4}_{k}\bigg]^{-1}\\&\times\Lambda^{1}_{k}
				\biggr\} \mathbb{E}[\phi^{1}_{k+1}\mid \mathcal {F}_{k-1}]
				+\biggl\{\tilde{D}_{k}-\bigg[\Lambda^{6}_{k}+\Lambda^{2\top}_{k}\mathbb{E}[T_{k+1} \mid \mathcal{F}_{k-1}]
			\\
				    &\times\tilde{A}_{k}+\Lambda^{4\top}_{k}\mathbb{E}[T_{k+1}\omega_{k} \mid \mathcal{F}_{k-1}]\tilde{A}_{k}+\Lambda^{2\top}_{k}\mathbb{E}[T_{k+1}\omega_{k} \mid \mathcal{F}_{k-1}]\tilde{D}_{k}\\&+\Lambda^{4\top}_{k}\mathbb{E}[T_{k+1}\omega_{k}^{2} \mid \mathcal{F}_{k-1}]\tilde{D}_{k}\bigg]^{\top}\times\bigg[\Lambda^{7}_{k}+\Lambda^{1\top}_{k}\mathbb{E}[T_{k+1} \mid \mathcal{F}_{k-1}]\\&\times\Lambda^{2}_{k}+\Lambda^{1\top}_{k}\mathbb{E}[T_{k+1}\omega_{k} \mid \mathcal{F}_{k-1}]\Lambda^{4}_{k}+\Lambda^{3\top}_{k}\mathbb{E}[T_{k+1}\omega_{k} \mid \mathcal{F}_{k-1}]\Lambda^{2}_{k}\\&+\Lambda^{3\top}_{k}\mathbb{E}[T_{k+1}\omega^{2}_{k} \mid \mathcal{F}_{k-1}]\Lambda^{4}_{k}\bigg]^{-1} \Lambda^{3}_{k}
					\biggr\}\times\mathbb{E}[\phi^{1}_{k+1}\omega_{k}\mid \mathcal {F}_{k-1}],
					\\
					\phi_{N}=&g.
				\end{split}
			\end{array}
			\right.
	\end{equation}
	Then using \eqref {eq:4.35}, we get that   $f_{k+1},g_{k+1}$ and $h_{k+1}$ in \eqref{eq:4.28} should satisfy the following expressions:
	
	{\small\begin{equation}\nonumber
				\begin{array}{lll}
					\begin{split}
						f_{k+1}&
						\\=&\bigg[\lambda^{1}_{k}+\tilde{A}_{k}\big(\mathbb{E}[T_{k+1}\mid \mathcal{F}_{k-1}]b_{k}+\mathbb{E}[T_{k+1}\omega_{k}\mid \mathcal{F}_{k-1}]\sigma_{k}\big)\\&+\tilde{D}_{k}\big(\mathbb{E}[T_{k+1}\omega_{k}\mid \mathcal{F}_{k-1}]b_{k}+\mathbb{E}[T_{k+1}\omega_{k}^{2}\mid \mathcal{F}_{k-1}]\sigma_{k}\big)\bigg]\\&-\bigg[\Lambda^{6}_{k}+\Lambda^{2\top}_{k}\mathbb{E}[T_{k+1} \mid \mathcal{F}_{k-1}]\tilde{A}_{k}+\Lambda^{4\top}_{k}\mathbb{E}[T_{k+1}\omega_{k} \mid \mathcal{F}_{k-1}]\\&\times\tilde{A}_{k}+\Lambda^{2\top}_{k}\mathbb{E}[T_{k+1}\omega_{k} \mid \mathcal{F}_{k-1}]\tilde{D}_{k}+\Lambda^{4\top}_{k}\mathbb{E}[T_{k+1}\omega_{k}^{2} \mid \mathcal{F}_{k-1}]\\&\times\tilde{D}_{k}\bigg]^{\top}\bigg[\Lambda^{7}_{k}+\Lambda^{1\top}_{k}\mathbb{E}[T_{k+1} \mid \mathcal{F}_{k-1}]\Lambda^{2}_{k}+\Lambda^{1\top}_{k}\mathbb{E}[T_{k+1}\omega_{k} \mid \mathcal{F}_{k-1}]\\&\times\Lambda^{4}_{k}+\Lambda^{3\top}_{k}\mathbb{E}[T_{k+1}\omega_{k} \mid \mathcal{F}_{k-1}]\Lambda^{2}_{k}+\Lambda^{3\top}_{k}\mathbb{E}[T_{k+1}\omega^{2}_{k} \mid \mathcal{F}_{k-1}]\Lambda^{4}_{k}\bigg]^{-1}\\&\times\bigg[\lambda^{2}_{k}+\Lambda^{1}_{k}\big(\mathbb{E}[T_{k+1}\mid \mathcal{F}_{k-1}]b_{k}+\mathbb{E}[T_{k+1}\omega_{k}\mid \mathcal{F}_{k-1}]\sigma_{k}\big)+\Lambda^{3}_{k}\\&\times\big(\mathbb{E}[T_{k+1}\omega_{k}\mid \mathcal{F}_{k-1}]b_{k}+\mathbb{E}[T_{k+1}\omega_{k}^{2}\mid \mathcal{F}_{k-1}]\sigma_{k}\big)\bigg],
					\end{split}
			\end{array}
		\end{equation}}
{\small\begin{equation}\nonumber
\begin{array}{lll}
\begin{split}
						g_{k+1}&\\=&\tilde{A}_{k}-\bigg[\Lambda^{6}_{k}+\Lambda^{2\top}_{k}\mathbb{E}[T_{k+1} \mid \mathcal{F}_{k-1}]\tilde{A}_{k}+\Lambda^{4\top}_{k}\mathbb{E}[T_{k+1}\omega_{k} \mid \mathcal{F}_{k-1}]\\&\times\tilde{A}_{k}+\Lambda^{2\top}_{k}\mathbb{E}[T_{k+1}\omega_{k} \mid \mathcal{F}_{k-1}]\tilde{D}_{k}+\Lambda^{4\top}_{k}\mathbb{E}[T_{k+1}\omega_{k}^{2} \mid \mathcal{F}_{k-1}]\\&\times\tilde{D}_{k}\bigg]^{\top}\bigg[\Lambda^{7}_{k}+\Lambda^{1\top}_{k}\mathbb{E}[T_{k+1} \mid \mathcal{F}_{k-1}]\Lambda^{2}_{k}+\Lambda^{1\top}_{k}\mathbb{E}[T_{k+1}\omega_{k} \mid \mathcal{F}_{k-1}]\\&\times\Lambda^{4}_{k}+\Lambda^{3\top}_{k}\mathbb{E}[T_{k+1}\omega_{k} \mid \mathcal{F}_{k-1}]\Lambda^{2}_{k}+\Lambda^{3\top}_{k}\mathbb{E}[T_{k+1}\omega^{2}_{k} \mid \mathcal{F}_{k-1}]\Lambda^{4}_{k}\bigg]^{-1}\Lambda^{1}_{k}
						,
				\end{split}
			\end{array}
		\end{equation}}
	{\small\begin{equation}\nonumber
\begin{array}{lll}
\begin{split}
						h_{k+1}&\\=&\tilde{D}_{k}-\bigg[\Lambda^{6}_{k}+\Lambda^{2\top}_{k}\mathbb{E}[T_{k+1} \mid \mathcal{F}_{k-1}]\tilde{A}_{k}+\Lambda^{4\top}_{k}\mathbb{E}[T_{k+1}\omega_{k} \mid \mathcal{F}_{k-1}]\\&\times\tilde{A}_{k}+\Lambda^{2\top}_{k}\mathbb{E}[T_{k+1}\omega_{k} \mid \mathcal{F}_{k-1}]\tilde{D}_{k}+\Lambda^{4\top}_{k}\mathbb{E}[T_{k+1}\omega_{k}^{2} \mid \mathcal{F}_{k-1}]\tilde{D}_{k}\bigg]^{\top}\\&\times\bigg[\Lambda^{7}_{k}+\Lambda^{1\top}_{k}\mathbb{E}[T_{k+1} \mid \mathcal{F}_{k-1}]\Lambda^{2}_{k}+\Lambda^{1\top}_{k}\mathbb{E}[T_{k+1}\omega_{k} \mid \mathcal{F}_{k-1}]\Lambda^{4}_{k}\\&+\Lambda^{3\top}_{k}\mathbb{E}[T_{k+1}\omega_{k} \mid \mathcal{F}_{k-1}]\Lambda^{2}_{k}+\Lambda^{3\top}_{k}\mathbb{E}[T_{k+1}\omega^{2}_{k} \mid \mathcal{F}_{k-1}]\Lambda^{4}_{k}\bigg]^{-1} \Lambda^{3}_{k}.
					\end{split}
				\end{array}
		\end{equation}}
		$\!\!$Given the results above, we now present the state feedback representation of the open-loop Nash-Equilibrium point $\pi$ for the discrete-time stochastic non-zero-sum difference games.
		\begin{thm}\label{thm:4.4}
			Let Assumptions \ref{ass:2.1}-\ref{ass:2.3} hold.~Suppose that the Riccati Eqs.~\eqref{eq:4.34} admit
			a set of solutions $T \in  L_{\mathbb{F}}^{\infty}(\overline{\mathcal{T}}; \mathbb{R}^{2n \times n}) $ and the BS$\Delta$E \eqref{eq:4.35} admit
			a set of solutions $\phi\in  L_{\mathbb{F}}^2(\overline{\mathcal{T}}; \mathbb{R}^{2n})$.~Then Problem \ref{pro:2.2} is  open-loop solvable, and the corresponding open-loop Nash-Equilibrium point $\pi=\big(\mathbf{u},\mathbf{v}\big)^{\top}\in   L_{\mathbb{F}}^2({\mathcal{T}}; \mathbb{R}^{m+l})$ has the following state feedback representation:
			\begin{eqnarray} \label{eq:4.36}
				\begin{split}
					&\pi_{k}=-\Upsilon\left(T_{k+1}\right)^{-1}\bigg[\Gamma\left(T_{k+1}\right)x_{k}+\Phi\left(T_{k+1},\phi_{k+1}\right)\bigg],
				\end{split}
			\end{eqnarray}
			where $x \in L_{\mathbb{F}}^2(\mathcal{T}; \mathbb{R}^{2n})$ is the solution of the following S$\Delta$E:
			\begin{equation}\label{eq:4.37}
				\left\{
				\begin{array}{ll}
					x^{\pi}_{k+1}={A}_{k}x^{\pi}_{k}+\Lambda^{2}_{k}{\pi}+{b}_{k}+\left({D}_{k}x^{\pi}_{k}+\Lambda^{4}_{k}{\pi}+{\sigma}_{k}\right)\omega_{k}, \\
					x^{\pi}_{0} = \xi \in \mathbb{R}^{n}, \quad k \in \mathcal{T}.
				\end{array}
				\right.
			\end{equation}
		\end{thm}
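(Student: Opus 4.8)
The plan is to argue by verification (synthesis): given the postulated solutions $T$ and $\phi$ of the decoupling equations, I would construct an explicit candidate and show it solves the unified Hamiltonian system \eqref{eq:4.25}, whence Theorem~\ref{thm:4.1} delivers the Nash property. First I would define the feedback $\pi_k$ by the right-hand side of \eqref{eq:4.36}, which is meaningful because the bounded invertibility of $\Upsilon(T_{k+1})$ is guaranteed under the hypotheses collected in Remark~\ref{rmk:4.3}. Substituting this $\pi_k$ into the closed-loop forward dynamics \eqref{eq:4.37} then yields, under Assumption~\ref{ass:2.1} together with $T\in L^\infty_{\mathbb{F}}$, a unique state process $x$ in the appropriate $L^2_{\mathbb{F}}$ space. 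I would next set the adjoint $Y_k:=T_k x_k+\phi_k$ and split it as $(y_1,y_2)$ according to \eqref{eq:4.21}; the integrability $Y\in L^2_{\mathbb{F}}$ is immediate from $T\in L^\infty_{\mathbb{F}}$, $\phi\in L^2_{\mathbb{F}}$ and $x\in L^2_{\mathbb{F}}$, and $\pi\in L^2_{\mathbb{F}}$ then follows from \eqref{eq:4.36}.

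The core of the argument is to verify that the constructed triple $(x,\pi,Y)$ satisfies every line of \eqref{eq:4.25}. The forward equation holds by construction. The stationarity line (the last equation of \eqref{eq:4.25}) holds because, by the computation leading to \eqref{eq:4.31}, inserting $Y_{k+1}=T_{k+1}x_{k+1}+\phi_{k+1}$ reduces that line to $\Gamma(T_{k+1})x_k+\Upsilon(T_{k+1})\pi_k+\Phi(T_{k+1},\phi_{k+1})=0$, and $\pi_k$ in \eqref{eq:4.36} is precisely the unique solution of this linear relation. For the backward adjoint line I would run the derivation \eqref{eq:4.30}--\eqref{eq:4.33} in the synthesis direction: substituting $Y_{k+1}=T_{k+1}x_{k+1}+\phi_{k+1}$, the forward law \eqref{eq:4.37}, and the closed-loop $\pi_k$ into the right-hand side of the adjoint equation produces exactly $\bigl[\Delta(T_{k+1})-\mathscr{L}^\top(T_{k+1})\Upsilon(T_{k+1})^{-1}\Gamma(T_{k+1})\bigr]x_k$ plus the corresponding inhomogeneous term, and these coincide with $T_k x_k+\phi_k=Y_k$ precisely because $T$ solves the Riccati recursion \eqref{eq:4.34} and $\phi$ solves \eqref{eq:4.35}. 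The terminal conditions match since $T_N=G$ and $\phi_N=g$ give $Y_N=Gx_N+g$, as required by \eqref{eq:4.25}.

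Having verified \eqref{eq:4.25}, I would invoke Remark~\ref{rmk:4.30} to disaggregate it into the player-wise Hamiltonian systems \eqref{eq:4.19}--\eqref{eq:4.20}, recovering the stationarity conditions \eqref{eq:4.7}--\eqref{eq:4.8}; the sufficiency direction of Theorem~\ref{thm:4.1} then certifies that $(\mathbf{u},\mathbf{v})$ is an open-loop Nash equilibrium, so Problem~\ref{pro:2.2} is open-loop solvable with the feedback representation \eqref{eq:4.36}. I expect the main obstacle to be the backward-equation verification: it requires unwinding the compound operators $\Delta,\mathscr{L}^\top,\Theta,\Gamma,\Upsilon,\Phi$ and correctly tracking the higher-order conditional-expectation terms $\mathbb{E}[\,\cdot\,\omega_k\mid\mathcal{F}_{k-1}]$ and $\mathbb{E}[\,\cdot\,\omega_k^2\mid\mathcal{F}_{k-1}]$, where the randomness of the coefficients blocks the deterministic-case simplifications. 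A secondary technical point, as indicated in Remark~\ref{rmk:4.3}, is confirming that the recursion \eqref{eq:4.34} keeps $T_k$ in the regime where $\Upsilon(T_{k+1})$ remains boundedly invertible, so that the feedback $\pi$ is genuinely admissible.
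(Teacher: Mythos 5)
Your proposal is correct and follows essentially the same verification route as the paper: define $\pi_k$ by the feedback formula, set $Y_k = T_k x_k + \phi_k$, check that $(x,\pi,Y)$ satisfies the unified Hamiltonian system \eqref{eq:4.25} using the Riccati equation \eqref{eq:4.34} and the BS$\Delta$E \eqref{eq:4.35}, and then invoke Theorem~\ref{thm:4.1}. Your additional attention to the integrability of $Y$ and $\pi$ and to the bounded invertibility of $\Upsilon(T_{k+1})$ is a welcome refinement of details the paper leaves implicit.
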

		\begin{proof}
			If the Riccati Eqs. \eqref{eq:4.34} admit
			a set of solutions $T$ and the BS$\Delta$E \eqref{eq:4.35} admit
			a set of solutions $\phi$.  Using the notations in Appendix B,  define an admissible control pair $\pi=\big(\mathbf{u},\mathbf{v}\big)^{\top}$ which has the following state feedback representation:
			\begin{eqnarray} \label{eq:4.38}
				\begin{split}
					&&\pi_{k}=-\Upsilon\left(T_{k+1}\right)^{-1}\bigg[\Gamma\left(T_{k+1}\right)x_{k}+\Phi\left(T_{k+1},\phi_{k+1}\right)\bigg],
				\end{split}
			\end{eqnarray}
			where $x$ is the solution of the following S$\Delta$E :
			\begin{equation}\label{eq:4.39}
				\left\{
				\begin{array}{ll}
					x^{\pi}_{k+1}={A}_{k}x^{\pi}_{k}+\Lambda^{2}_{k}{\pi_{k}}+{b}_{k}+\left({D}_{k}x^{\pi}_{k}+\Lambda^{4}_{k}{\pi_{k}}+{\sigma}_{k}\right)\omega_{k}, \\
					x^{\pi}_{0} = \xi \in \mathbb{R}^{n}, \quad k \in \mathcal{T}.
				\end{array}
				\right.
			\end{equation}
			Define
			\begin{equation}\label{eq:4.40}
				Y_{k}\equiv T_{k}x_{k}+\phi_{k}
				~~ k\in\mathcal{T},
			\end{equation}
			From the derivations of the  stochastic Riccati Eqs.~\eqref{eq:4.34}~and BS$\Delta$E~\eqref{eq:4.35} presented earlier, we can derive $Y_{k}$ satisfies the following BS$\Delta$E:
			\begin{equation}\label{eq:4.41}
				\left\{\begin{array}{lll}
					\begin{split}
						Y_{k} =& \tilde{A}_{k}^{\top}\mathbb{E}[Y_{k+1} \mid \mathcal{F}_{k-1}] + \tilde{D}_{k}^{\top}\mathbb{E}[Y_{k+1}\omega_{k} \mid \mathcal{F}_{k-1}] +\Lambda^{5}_{k}x_{k} \\&+ \Lambda_{k}^{6 \top}\pi_{k}+\lambda^{1}_{k},
						\\
						Y_{ N}=&Gx_{N}+g,~~~ k\in\mathcal{T},
					\end{split}
				\end{array}
				\right.
			\end{equation}
			It's not hard to spot that \eqref{eq:4.41} is the adjoint equation of  the state equation \eqref{eq:4.39} related to $\big(\pi,x\big)$.~By performing further computations, we  can obtain the following stationarity conditions:
			\begin{equation}\label{eq:4.42}
				\begin{split}
					&
					\Lambda^{1\top}_{k}\mathbb{E}[Y_{k+1} \mid \mathcal{F}_{k-1}] + \Lambda^{3\top}_{k}\mathbb{E}[Y_{ k+1}\omega_{k} \mid \mathcal{F}_{k-1}] + \Lambda^{6}_{k}x_{k} + \Lambda^{7}_{k}\pi_{k}+\lambda^{2}_{k}
					= 0.
				\end{split}
			\end{equation}
			According to Theorem \ref{thm:4.1}, we get that $\pi=\big(\mathbf{u},\mathbf{v}\big)^{\top}$ is the open-loop Nash-Equilibrium point  specified by \eqref{eq:4.36};~$(\pi,x,Y)$ is the unique solution to the Hamiltonian system \eqref{eq:4.25}.  The proof is complete.
		\end{proof}
		%

In the previous theorem, we establish a sufficient condition for the open-loop solvability of Problem
\ref {pro:2.2}. Specifically, the invertibility of the Riccati equations plays a crucial role, but it can be challenging to verify. To address this issue, we next study a necessary and sufficient condition that does not require the verification of invertibility. For this purpose, we will adopt a Lyapunov-like approach to provide a more tractable criterion.

 we define the feedback gain matrix spaces for the two players as follows:
\[
\boldsymbol{\Theta}^1(\mathcal{T}) := L_{\mathbb{F}}^\infty(\mathcal{T}; \mathbb{R}^{m \times n}), \quad
\boldsymbol{\Theta}^2(\mathcal{T}) := L_{\mathbb{F}}^\infty(\mathcal{T}; \mathbb{R}^{l \times n}),
\]
and set the combined feedback space as
\[
\boldsymbol{\Theta}(\mathcal{T}) := \boldsymbol{\Theta}^1(\mathcal{T}) \times \boldsymbol{\Theta}^2(\mathcal{T}).
\]

We consider state-feedback strategies of the form
\[
\pi_k = \Pi_k x_k + \Sigma_k, \quad k \in \mathcal{T},
\]
where $\pi$ is compact form in Appendix A,
\[
\Pi_k = \begin{pmatrix} \Pi_k^1 \\ \Pi_k^2 \end{pmatrix}, \quad
\Sigma_k = \begin{pmatrix} \Sigma_k^1 \\ \Sigma_k^2 \end{pmatrix}
\]
and the overall strategy pair \((\Pi, \Sigma)\) satisfies
\[
(\Pi, \Sigma) = (\Pi^{1}, \Sigma^{1}; \Pi^{2}, \Sigma^{2}),\]
\[\text{with}\quad\Pi^1 \in \boldsymbol{\Theta}^{1}(\mathcal{T}),\quad \Pi^2 \in \boldsymbol{\Theta}^{2}(\mathcal{T}), \quad \Sigma^1 \in \mathcal{U}, \quad \Sigma^2 \in \mathcal{V}.
\]

Thus, we define the admissible closed-loop strategy space as
\[
\mathcal{S}(\mathcal{T}) := \boldsymbol{\Theta}^1(\mathcal{T}) \times \mathcal{U} \times \boldsymbol{\Theta}^2(\mathcal{T}) \times \mathcal{V}.
\]
Then the overall strategy
$
(\Pi, \Sigma) \in \mathcal{S}(\mathcal{T}).
$
and we may group the strategies by player and denote
\[
\Pi := \begin{pmatrix} \Pi^1 \\ \Pi^2 \end{pmatrix} \in \boldsymbol{\Theta}(\mathcal{T}), \quad \Sigma := \begin{pmatrix} \Sigma^1 \\ \Sigma^2 \end{pmatrix}\in\mathcal{U}\times\mathcal{V},
\]
so that the pair \((\Pi, \Sigma)\) represents a closed-loop strategy profile, which is assumed to be independent of the initial state \( \xi \).

\begin{defn}\label{defn1}
We say that the open-loop Nash equilibria of the discrete-time stochastic difference game (DTSDG) with initial time \(0\) admit a \emph{closed-loop representation}, if there exists a strategy tuple
\[
(\Pi, \Sigma) \in \mathcal{S}(\mathcal{T}),
\]
such that for any initial state \(\xi \in \mathbb{R}^n\) and all \(k \in \mathcal{T}\), the control law
\begin{equation}\label{eq:4000}
	\pi_k = \Pi_k x_k + \Sigma_k,
\end{equation}
constitutes an open-loop Nash equilibrium corresponding to the initial pair \((0, \xi)\), where the resulting state process \(x(\cdot) = x(\cdot; 0, \xi, \Pi, \Sigma)\) evolves according to the closed-loop system:
\begin{equation}\label{eq:440}
\left\{
\begin{aligned}
x_{k+1} =&\, \Big(A_k + \Lambda_k^2 \Pi_k + D_k \omega_k + \Lambda_k^4 \Pi_k \omega_k\Big) x_k \\
&\, + \Lambda_k^2 \Sigma_k + b_k + \Lambda_k^4 \Sigma_k \omega_k + \sigma_k \omega_k, \\
x_0 =&\, \xi \in \mathbb{R}^n, \quad k \in \mathcal{T}.
\end{aligned}
\right.
\end{equation}
\end{defn}

We now proceed to characterize the structure of such closed-loop representations of open-loop Nash equilibria.

\begin{thm}\label{thm:4.6}
Let Assumptions~\ref{ass:2.1}--\ref{ass:2.3} are satisfied and $(\Pi, \Sigma) \in \mathcal{S}(\mathcal{T})$. Then, the open-loop Nash equilibrium for Problem~\ref{pro:2.2} can be expressed in the closed-loop form~\eqref{eq:4000} if and only if the following both conditions hold:

\smallskip

\noindent\textbf{(i)} There exists a process \( T \in L_{\mathbb{F}}^{\infty}(\overline{\mathcal{T}}; \mathcal{S}_{+}^{n}) \)
solving the following backward Lyapunov-type stochastic difference equation:
\begin{equation}\label{eq:5.15}
\left\{
\begin{aligned}
T_k &= \Delta(T_{k+1}) + \mathscr{L}^{\top}(T_{k+1}) \Pi_{k+1}, \quad k \in \mathcal{T}, \\
T_N &= G,
\end{aligned}
\right.
\end{equation}
such that the stationarity condition
\begin{equation}\label{eq:5.17}
\Gamma(T_{k+1}) + \Upsilon(T_{k+1}) \Pi_{k+1} = 0
\end{equation}
holds almost surely and for almost every \( k \in \mathcal{T} \).

\smallskip

\noindent\textbf{(ii)} There exists a process \( \phi^1 \in L_{\mathbb{F}}^2(\mathcal{T}; \mathbb{R}^n) \) solving the backward stochastic difference equation:
\begin{equation}\label{eq:5.18}
\left\{
\begin{aligned}
\phi_k &= \Theta(T_{k+1}, \phi_{k+1}) + \mathscr{L}^{\top}(T_{k+1}) \Sigma_{k+1}, \quad k \in \mathcal{T}, \\
\phi_N &= g,
\end{aligned}
\right.
\end{equation}
such that the consistency condition
\begin{equation}\label{eq:5.19}
\Phi(T_{k+1}, \phi_{k+1}) + \Upsilon(T_{k+1}) \Sigma_{k+1} = 0
\end{equation}
holds almost surely and for almost every \( k \in \mathcal{T} \).
\end{thm}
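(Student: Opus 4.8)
The plan is to reduce both implications to the stationarity characterization of Theorem~\ref{thm:4.1}, which says that $(\mathbf u,\mathbf v)$ is an open-loop Nash equilibrium precisely when the triple $(\pi,x,Y)$ solves the unified Hamiltonian system~\eqref{eq:4.25}. The bridge between the feedback law~\eqref{eq:4000} and~\eqref{eq:4.25} is the affine ansatz $Y_k=T_kx_k+\phi_k$ of~\eqref{eq:4.26}, together with the two identities already derived: the adjoint decoupling~\eqref{eq:4.30}, $T_kx_k+\phi_k=\Delta(T_{k+1})x_k+\mathscr L^{\top}(T_{k+1})\pi_k+\Theta(T_{k+1},\phi_{k+1})$, and the stationarity decoupling~\eqref{eq:4.31}, $0=\Gamma(T_{k+1})x_k+\Upsilon(T_{k+1})\pi_k+\Phi(T_{k+1},\phi_{k+1})$. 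Substituting $\pi_k=\Pi_kx_k+\Sigma_k$ into these two identities and separating the coefficient of $x_k$ from the constant term is what produces the Lyapunov recursion~\eqref{eq:5.15} with the stationarity relation~\eqref{eq:5.17}, and the companion pair~\eqref{eq:5.18}--\eqref{eq:5.19} for the inhomogeneous data. I would establish the two directions separately.

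For sufficiency I would assume (i) and (ii), fix $\xi$, let $x$ solve the closed-loop system~\eqref{eq:440}, set $\pi_k=\Pi_kx_k+\Sigma_k$, and define $Y_k:=T_kx_k+\phi_k$. The forward line of~\eqref{eq:4.25} then holds by construction; inserting the feedback into the right-hand side of~\eqref{eq:4.30} and using~\eqref{eq:5.15} for the $x_k$-part and~\eqref{eq:5.18} for the constant part shows that this $Y_k$ satisfies the backward adjoint line with terminal value $Y_N=Gx_N+g$; and inserting the feedback into~\eqref{eq:4.31} together with~\eqref{eq:5.17} and~\eqref{eq:5.19} makes the stationarity line of~\eqref{eq:4.25} vanish identically. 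Thus $(\pi,x,Y)$ solves~\eqref{eq:4.25}, and Theorem~\ref{thm:4.1} yields that $\pi$ is an open-loop Nash equilibrium; as $T,\phi,\Pi,\Sigma$ are independent of $\xi$, this holds for every initial state, which is exactly the closed-loop representation of Definition~\ref{defn1}.

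For necessity I would suppose~\eqref{eq:4000} provides an open-loop Nash equilibrium for every $\xi$, and construct $T,\phi$ directly from the given gains by the backward recursions~\eqref{eq:5.15} and~\eqref{eq:5.18} with terminal data $T_N=G$, $\phi_N=g$; these are well posed and land in the stated spaces because the recursion is backward with bounded $\mathcal F_{k-1}$-measurable coefficients under Assumptions~\ref{ass:2.1}--\ref{ass:2.2}, the sign and boundedness of $T$ following from the convexity supplied by Assumption~\ref{ass:2.3} as in the single-player value-function identity. By Theorem~\ref{thm:4.1} the equilibrium triple solves~\eqref{eq:4.25}, and I would then prove $Y_k=T_kx_k+\phi_k$ along each trajectory by backward induction: the case $k=N$ is immediate, and the inductive step substitutes $Y_{k+1}=T_{k+1}x_{k+1}+\phi_{k+1}$ into the backward line of~\eqref{eq:4.25}, reruns the computation~\eqref{eq:4.30}, and applies~\eqref{eq:5.15} and~\eqref{eq:5.18}.

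Feeding this identity and the feedback into the stationarity line of~\eqref{eq:4.25} and using~\eqref{eq:4.31} then gives, almost surely and for every $\xi$,
\[
\big[\Gamma(T_{k+1})+\Upsilon(T_{k+1})\Pi_k\big]x_k+\big[\Upsilon(T_{k+1})\Sigma_k+\Phi(T_{k+1},\phi_{k+1})\big]=0 .
\]
The hard part is to separate the two brackets, and this is the main obstacle. Since $(\Pi,\Sigma)$ and the constructed $(T,\phi)$ do not depend on $\xi$, I would write the closed-loop state as $x_k(\xi)=\Psi_k\,\xi+\bar x_k$, where $\Psi_k$ is the $\mathcal F_{k-1}$-measurable random fundamental solution of the homogeneous part of~\eqref{eq:440} and $\bar x_k$ is the particular solution at $\xi=0$; setting $\xi=0$ isolates the constant relation~\eqref{eq:5.19}, while differencing two initial states and letting $\xi$ range over $\mathbb R^n$ yields $\big[\Gamma(T_{k+1})+\Upsilon(T_{k+1})\Pi_k\big]\Psi_k=0$ a.s. Concluding~\eqref{eq:5.17} then requires the almost-sure full rank of $\Psi_k$, which is precisely the delicate point of the discrete-time stochastic setting: unlike continuous-time flows, the one-step maps $A_k+\Lambda^2_k\Pi_k+(D_k+\Lambda^4_k\Pi_k)\omega_k$ can be singular when $\omega_k$ is discrete-valued, so this separation must be justified on the nonsingularity event of $\Psi_k$ or under an additional non-degeneracy hypothesis on the closed-loop transition. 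Securing this rank property is where the genuine effort of the necessity direction lies.
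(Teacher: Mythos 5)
Your sufficiency argument coincides with the paper's: define $Y_k=T_kx_k+\phi_k$ along the closed-loop trajectory, verify the backward line of \eqref{eq:4.25} from \eqref{eq:5.15} and \eqref{eq:5.18} and the stationarity line from \eqref{eq:5.17} and \eqref{eq:5.19}, and invoke Theorem~\ref{thm:4.1}. In the necessity direction you depart from the paper in a way that is actually more careful. The paper simply asserts that the adjoint process of \eqref{eq:4.25} ``satisfies $Y_k=T_kx_k+\phi_k$'' without saying where $T$ and $\phi$ come from, whereas you construct them from the backward recursions \eqref{eq:5.15} and \eqref{eq:5.18} with terminal data $G$ and $g$ and then prove the decoupling identity by backward induction; this removes a circularity in the paper's presentation. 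Both routes arrive at the same pointwise identity $\big[\Gamma(T_{k+1})+\Upsilon(T_{k+1})\Pi_k\big]x_k+\big[\Phi(T_{k+1},\phi_{k+1})+\Upsilon(T_{k+1})\Sigma_k\big]=0$.

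The place where your proposal stops short is exactly the place where the paper's proof is weakest: splitting that identity into \eqref{eq:5.17} and \eqref{eq:5.19}. The paper disposes of it with the phrase ``due to the arbitrariness of $x_k$,'' but $x_k$ is not arbitrary --- it is the image of $\xi$ under the random affine map $x_k(\xi)=\Psi_k\xi+\bar x_k$, so varying $\xi$ only shows that $\big[\Gamma(T_{k+1})+\Upsilon(T_{k+1})\Pi_k\big]\Psi_k=0$ a.s., and one needs $\Psi_k$ to be a.s.\ invertible to kill the bracket itself. You identify this correctly, and you are right that in discrete time the one-step closed-loop maps $A_k+\Lambda^2_k\Pi_k+(D_k+\Lambda^4_k\Pi_k)\omega_k$ can be singular with positive probability, so the conclusion genuinely requires either restricting to the nonsingularity event or adding a non-degeneracy hypothesis that neither Assumptions~\ref{ass:2.1}--\ref{ass:2.3} nor the paper supplies. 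Since you leave this unresolved, your necessity proof is incomplete at that step --- but the paper's is too, only silently. One small ordering correction: setting $\xi=0$ does \emph{not} by itself isolate \eqref{eq:5.19}, because $\bar x_k\neq 0$ in general; you must first establish \eqref{eq:5.17} (via the rank argument) and only then does substituting any single trajectory yield the vanishing of the constant bracket.
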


\begin{proof}
	\textbf{Sufficiency:}
	\\
Let \eqref{eq:5.15}-\eqref{eq:5.19} hold. Fix an arbitrary initial state \( \xi \in \mathbb{R}^n \), and let \( x = x(\cdot; 0, \xi, \Pi, \Sigma) \) denote the state trajectory governed by the closed-loop system \eqref{eq:440}, under the control law \eqref{eq:4000}, with feedback gain pair \( (\Pi, \Sigma) \).  To demonstrate that the open-loop Nash equilibrium has a closed-form expression \( (\Pi, \Sigma) \), we refer to Remark 4.3. Specifically, it suffices to verify \( (x,\pi, Y) \) satisfy the Hamiltonian system \eqref{eq:4.25}.

Define
\[
Y_k = T_k x_k + \phi_k,~~Y_{N} = Gx_{N}+g
\]
where \( T_k \) and \( \phi_k \) are adapted processes of \eqref{eq:5.15} and \eqref{eq:5.17}, respectively.

\smallskip
\textit{Step 1: Verification of the adjoint equation.}

From the ansatz, direct computation yields:
\[
\begin{split}
	 &\tilde{A}_{k}^{\top} \mathbb{E}[Y_{k+1} \mid \mathcal{F}_{k-1}] + \tilde{D}_{k}^{\top} \mathbb{E}[Y_{k+1} \omega_{k} \mid \mathcal{F}_{k-1}] + \Lambda^{5}_{k} x_{k} + \Lambda_{k}^{6 \top} \pi_{k} + \lambda^{1}_{k} \\
	=& \tilde{A}_k^{\top} \mathbb{E}[Y_{k+1} \mid \mathcal{F}_{k-1}] + \tilde{D}_k^{\top} \mathbb{E}[Y_{k+1} \omega_k \mid \mathcal{F}_{k-1}] + \Lambda_k^5 x_k+ \Lambda_k^{6\top} \big( \Pi_{k} x_k  \\
	& + \Sigma_{k} \big) + \lambda_k^1 \\
	=& \tilde{A}_k^{\top} \mathbb{E}\bigg[T_{k+1} \biggl\{\Big(A_k + \Lambda_k^2 \Pi_k + D_k \omega_k + \Lambda_k^4 \Pi_k \omega_k\Big) x_k+ \Lambda_k^2 \Sigma_k + b_k \\&+ \Lambda_k^4 \Sigma_k \omega_k + \sigma_k \omega_k\biggr\} + \phi_{k+1} \mid \mathcal{F}_{k-1}\bigg] + \tilde{D}_k^{\top} \mathbb{E}\bigg[(T_{k+1} \biggl\{\Big(A_k + \Lambda_k^2 \Pi_k \\&+ D_k \omega_k + \Lambda_k^4 \Pi_k \omega_k\Big) x_k+ \Lambda_k^2 \Sigma_k + b_k + \Lambda_k^4 \Sigma_k \omega_k + \sigma_k \omega_k\biggr\} + \phi_{k+1}) \\
	&\times\omega_k \mid \mathcal{F}_{k-1}\bigg]  + \Lambda_k^5 x_k + \Lambda_k^{6\top} \left( \Pi_{k} x_k + \Sigma_{k} \right) + \lambda_k^1 \\
	=& \left( \Delta(T_{k+1}) + \mathscr{L}^{\top}(T_{k+1}) \Pi_{k+1} \right) x_k + \Theta(T_{k+1}, \phi_{k+1}) + \mathscr{L}^{\top}(T_{k+1}) \Sigma_{k+1}
   \\=&T_k x_k + \phi_k
   \\=&Y_k  .
\end{split}
\]
Therefore, under equations \eqref{eq:5.15} and \eqref{eq:5.18}, the process
$Y$ satisfies the backward equation of the Hamiltonian system \eqref{eq:4.25}.

\smallskip
\textit{Step 2: Verification of the stationarity condition.}
		\begin{align*}
			 & \Lambda^{1\top}_{k}\mathbb{E}[Y_{k+1} \mid \mathcal{F}_{k-1}] + \Lambda^{3\top}_{k}\mathbb{E}[Y_{ k+1}\omega_{k} \mid \mathcal{F}_{k-1}] + \Lambda^{6}_{k}\boldsymbol{I}_{n}x_{k} + \Lambda^{7}_{k}\pi_{k}+\lambda^{2}_{k}
			\\
			=& \Lambda_k^{1\top} \mathbb{E}[T_{k+1} x_{k+1} + \phi_{k+1} \mid \mathcal{F}_{k-1}] + \Lambda_k^{3\top} \mathbb{E}[(T_{k+1} x_{k+1} + \phi_{k+1}) \\
			&\times\omega_k \mid \mathcal{F}_{k-1}]  + \Lambda_k^6\boldsymbol{I}_{n} x_k + \Lambda_k^7 (\Pi_{k} x_k + \Sigma_{k}) + \lambda_k^2
			\\=& \Lambda_k^{1\top} \mathbb{E}\bigg[T_{k+1} \biggl\{\Big(A_k + \Lambda_k^2 \Pi_k + D_k \omega_k + \Lambda_k^4 \Pi_k \omega_k\Big) x_k+ \Lambda_k^2 \Sigma_k + b_k +\\& \Lambda_k^4 \Sigma_k \omega_k + \sigma_k \omega_k\biggr\} + \phi_{k+1} \mid \mathcal{F}_{k-1}\bigg] + \Lambda_k^{3\top} \mathbb{E}\bigg[(T_{k+1} \biggl\{\Big(A_k + \Lambda_k^2 \Pi_k \\&+ D_k \omega_k + \Lambda_k^4 \Pi_k \omega_k\Big) x_k+ \Lambda_k^2 \Sigma_k + b_k + \Lambda_k^4 \Sigma_k \omega_k + \sigma_k \omega_k\biggr\} + \phi_{k+1}) \\&\times\omega_k \mid \mathcal{F}_{k-1}\bigg] + \Lambda_k^6\boldsymbol{I}_{n} x_k + \Lambda_k^7 \big(\Pi_k x_k + \Sigma_k\big) + \lambda_k^2  \\
			=& \left[\Gamma(T_{k+1}) + \Upsilon(T_{k+1}) \Pi_{k+1} \right] x_k + \Phi(T_{k+1}, \phi_{k+1}) + \Upsilon(T_{k+1}) \Sigma_{k+1}
\\=&0.
		\end{align*}
	This reveals the stationarity condition in Hamiltonian system \eqref{eq:4.25} is satisfied,when \eqref{eq:5.17} and \eqref{eq:5.19} hold.~Therefore, the sufficiency is established.

	\textbf{Necessity:}
\\
Suppose the open-loop Nash equilibrium \((\Pi, \Sigma) \in \mathcal{S}(\mathcal{T})\) admits a closed-loop representation:
\begin{equation}\label{eq:4400}
    \pi_k = \Pi_k x_k + \Sigma_k,
\end{equation}
where the state process \(x(\cdot)\) satisfies the closed-loop system \eqref{eq:440}.~From the Hamiltonian system \eqref{eq:4.25}, there exists an adjoint process $\{Y_k\}_{k\in\mathcal{T}}$ satisfying~$Y_k = T_k x_k + \phi_k$.

Substituting the ansatz, closed-loop representation \eqref{eq:4400} and the closed-loop dynamics into \eqref{eq:4.25}'s adjoint equation yields:
\begin{equation}\label{eq:4.48}
\begin{split}
	Y_k =&\ \tilde{A}_{k}^{\top} \mathbb{E}[Y_{k+1} \mid \mathcal{F}_{k-1}] + \tilde{D}_{k}^{\top} \mathbb{E}[Y_{k+1} \omega_k \mid \mathcal{F}_{k-1}] + \Lambda^{5}_{k} x_k \\&+ \Lambda_{k}^{6 \top} \pi_k + \lambda^{1}_{k} \\
	=&\ \tilde{A}_k^{\top} \mathbb{E}[Y_{k+1} \mid \mathcal{F}_{k-1}] + \tilde{D}_k^{\top} \mathbb{E}[Y_{k+1} \omega_k \mid \mathcal{F}_{k-1}] + \Lambda_k^5 x_k \\&+ \Lambda_k^{6\top} \left( \Pi_{k} x_k + \Sigma_{k} \right) + \lambda_k^1 \\
	=&\ \tilde{A}_k^{\top} \mathbb{E}\bigg[T_{k+1} \biggl\{\Big(A_k + \Lambda_k^2 \Pi_k + D_k \omega_k + \Lambda_k^4 \Pi_k \omega_k\Big) x_k\\
	&+ \Lambda_k^2 \Sigma_k + b_k +  \Lambda_k^4 \Sigma_k \omega_k + \sigma_k \omega_k\biggr\} + \phi_{k+1} \mid \mathcal{F}_{k-1}\bigg]  \\
	&+ \tilde{D}_k^{\top} \mathbb{E}\bigg[(T_{k+1} \biggl\{\Big(A_k + \Lambda_k^2 \Pi_k + D_k \omega_k + \Lambda_k^4 \Pi_k \omega_k\Big) x_k \\
	&+ \Lambda_k^2 \Sigma_k + b_k + \Lambda_k^4 \Sigma_k \omega_k + \sigma_k \omega_k\biggr\} + \phi_{k+1}) \omega_k \mid \mathcal{F}_{k-1}\bigg]  \\
	&+ \Lambda_k^5 x_k + \Lambda_k^{6\top} \left( \Pi_{k} x_k + \Sigma_{k} \right) + \lambda_k^1 \\
	=&\ \left( \Delta(T_{k+1}) + \mathscr{L}^{\top}(T_{k+1}) \Pi_{k+1} \right) x_k + \Theta(T_{k+1}, \phi_{k+1}) \\
	&+ \mathscr{L}^{\top}(T_{k+1}) \Sigma_{k+1} \\
	=&\ T_k x_k + \phi_k.
\end{split}
\end{equation}

Here, equations \eqref{eq:5.15} and \eqref{eq:5.18} are used.

Furthermore,~substituting the ansatz, closed-loop representation \eqref{eq:4400} and the closed-loop dynamics into the stationarity condition in the Hamiltonian system \eqref{eq:4.25}, we obtain
\begin{align*}
	0=& \Lambda^{1\top}_{k}\mathbb{E}[Y_{k+1} \mid \mathcal{F}_{k-1}] + \Lambda^{3\top}_{k}\mathbb{E}[Y_{ k+1}\omega_{k} \mid \mathcal{F}_{k-1}] + \Lambda^{6}_{k}\boldsymbol{I}_{n}x_{k} \\&+ \Lambda^{7}_{k}\pi_{k}+\lambda^{2}_{k} \\
	=& \Lambda_k^{1\top} \mathbb{E}[T_{k+1} x_{k+1} + \phi_{k+1} \mid \mathcal{F}_{k-1}] + \Lambda_k^{3\top} \mathbb{E}[(T_{k+1} x_{k+1} \\&+ \phi_{k+1}) \omega_k \mid \mathcal{F}_{k-1}]  + \Lambda_k^6\boldsymbol{I}_{n} x_k + \Lambda_k^7 (\Pi_{k} x_k + \Sigma_{k}) + \lambda_k^2 \\
	=& \Lambda_k^{1\top} \mathbb{E}\bigg[T_{k+1} \biggl\{\Big(A_k + \Lambda_k^2 \Pi_k + D_k \omega_k + \Lambda_k^4 \Pi_k \omega_k\Big) x_k \\
	&+ \Lambda_k^2 \Sigma_k + b_k +  \Lambda_k^4 \Sigma_k \omega_k + \sigma_k \omega_k\biggr\} + \phi_{k+1} \mid \mathcal{F}_{k-1}\bigg] \\
	&+ \Lambda_k^{3\top} \mathbb{E}\bigg[(T_{k+1} \biggl\{\Big(A_k + \Lambda_k^2 \Pi_k + D_k \omega_k + \Lambda_k^4 \Pi_k \omega_k\Big)\\
	& \times x_k + \Lambda_k^2 \Sigma_k + b_k +  \Lambda_k^4 \Sigma_k \omega_k + \sigma_k \omega_k\biggr\} + \phi_{k+1}) \omega_k \mid \mathcal{F}_{k-1}\bigg] \\&+ \Lambda_k^6\boldsymbol{I}_{n} x_k + \Lambda_k^7 \big(\Pi_k x_k + \Sigma_k\big) + \lambda_k^2 \\
	=& \left[\Gamma(T_{k+1}) + \Upsilon(T_{k+1}) \Pi_{k+1} \right] x_k + \Phi(T_{k+1}, \phi_{k+1}) + \Upsilon(T_{k+1}) \Sigma_{k+1}.
\end{align*}
Due to the arbitrariness of \(x_k\), \eqref{eq:5.17} and \eqref{eq:5.19} hold.~Thus, both conditions (i)-(ii) are necessary.

\end{proof}

\section{Conclusion}
		
This paper investigates the open-loop Nash equilibrium for discrete-time linear quadratic (LQ) non-zero-sum difference games with random coefficients. The introduction of stochastic terms leads to results, particularly the form of the Riccati equations, that differ significantly from the deterministic case \cite{9}. The contributions of this paper are twofold. First, the stochastic Riccati equations are inherently more complex, as they include additional terms that account for the randomness of the coefficients. Specifically, unlike the deterministic case where only the conditional expectation term \(  \mathbb{E}[T_{k+1} \mid \mathcal{F}_{k-1}]  \) appears, the stochastic setting introduces some additional terms: \( \mathbb{E}[T_{k+1} \omega_k \mid \mathcal{F}_{k-1}] \) and \( \mathbb{E}[T_{k+1} \omega_k^2 \mid \mathcal{F}_{k-1}] \), which reflect the influence of the multiplicative noise and its interaction with the state and control weighting matrices. Second, the state feedback representation of the Nash equilibrium simplifies the complexity of the stochastic Hamiltonian systems for each player. We introduce the G\^{a}teaux derivative of each player's cost functional and use it to establish the necessary and sufficient conditions for the existence of a Nash equilibrium point. Also, we integrate two fully coupled forward and backward stochastic difference equations, collectively called the stochastic Hamiltonian system, into one and solve for the Nash equilibrium point using stochastic difference Riccati equations. However, due to the simultaneous appearance of the Nash equilibrium point in both the state and adjoint equations, decoupling the Hamiltonian systems leads to solutions represented by fully coupled non-symmetric Riccati equations, which poses significant challenges in addressing complex system problems. Ultimately, we derive an explicit expression for the Nash equilibrium point through state feedback representation. The insights from this research hold important implications for theoretical advancements and practical applications in control theory and game theory. There are many issues in discrete-time systems with random coefficients remain to be explored, including: (i) establishing verifiable existence conditions for solutions to the coupled Riccati equations, particularly focusing on the regularity requirements for random coefficient matrices; (ii) investigating discrete-time zero-sum Stackelberg games with random coefficients; (iii) extending the analysis to LQ non-zero-sum and zero-sum Nash games over infinite time horizons; and (iv) studying LQ Nash games for discrete-time Markov jump linear systems with random coefficients. These open problems will constitute our primary research agenda moving forward.

		%

\section{Declarations}
		
\textbf{Declaration of competing interest}:
		The authors declare that they have no known competing financial interests or personal relationships that could have appeared
		to influence the work reported in this paper.\\
		\textbf{Funding}: This research was supported by the Key Projects of Natural Science Foundation of Zhejiang Province (No. LZ22A010005), the National Natural Science Foundation of China (No.12271158), RGC of Hong Kong grants 15221621, 15226922 and 15225124, and PolyU 1-ZVXA. The funding played no role in the design, execution, analysis, or interpretation of the study. \\
		\textbf{Availability of Data and Materials}:
		We do not have any research data outside the submitted manuscript file. All of the material is owned by the authors and/or no permissions are required.

		\appendix
		\renewcommand{\theequation}{7.\arabic{equation}} 
		\setcounter{equation}{0} 
		\section*{Appendix \\\\\midsmall{A.~Compact Representation for FBS$\Delta$E of Two players}}
	\addcontentsline{toc}{section}{Appendix A: Compact Representation for FBS$\Delta$E  of Two players}
		\normalsize{The following matrix representation is introduced for more compact representation of FBS$\Delta$E :}
\small{
	\begin{equation}\label{eq:7.1}
		\left\{
		\begin{aligned}
			\Lambda^{1}_{k} &\equiv \begin{pmatrix} B_{k} & 0 \\ 0 & C_{k} \end{pmatrix}_{2n\times(m+l)}, \quad
			\Lambda^{2}_{k} \equiv \begin{pmatrix} B_{k} & C_{k} \end{pmatrix}_{n\times(m+l)}, \\[1ex]
			\Lambda^{3}_{k} &\equiv \begin{pmatrix} E_{k} & 0 \\ 0 & F_{k} \end{pmatrix}_{2n\times(m+l)},
			\quad\Lambda^{4}_{k} \equiv \begin{pmatrix} E_{k} & F_{k} \end{pmatrix}_{n\times(m+l)}, \\[1ex]
			\Lambda^{5}_{k} &\equiv \begin{pmatrix} Q_{k} \\ P_{k} \end{pmatrix}_{2n\times n}, \qquad\qquad\quad
			\Lambda^{6}_{k} \equiv \begin{pmatrix} L_{k} & 0 \\ 0 & M_{k} \end{pmatrix}_{(m+l)\times 2n},\\[1ex]
			\Lambda^{7}_{k} &\equiv \begin{pmatrix} R_{k} & 0 \\ 0 & S_{k} \end{pmatrix}_{(m+l)\times(m+l)}, \quad
			\pi_{k} \equiv \begin{pmatrix} u_{k} \\ v_{k} \end{pmatrix}_{(m+l)\times 1}, \\[1ex]
			Y_{k} &\equiv \begin{pmatrix} y^{1}_{k} \\ y^{2}_{k} \end{pmatrix}_{2n\times 1}, \qquad\qquad\qquad
			T_{k} \equiv \begin{pmatrix} T^{1}_{k} \\ T^{2}_{k} \end{pmatrix}_{2n\times n}, \\[1ex]
			\phi_{k} &\equiv \begin{pmatrix} \phi^{1}_{k} \\ \phi^{2}_{k} \end{pmatrix}_{2n\times 1},\qquad\qquad\qquad
			G \equiv \begin{pmatrix} G_{N} \\ H_{N} \end{pmatrix}_{2n\times n}, \\[1ex]
			g &\equiv \begin{pmatrix} g_{N} \\ h_{N} \end{pmatrix}_{2n\times 1}, \qquad\qquad\qquad
			\boldsymbol{I}_{n} \equiv \begin{pmatrix} I_{n} \\ I_{n} \end{pmatrix}_{2n\times n}, \\[1ex]
			\lambda^{1}_{k} &\equiv \begin{pmatrix} q_{k} \\ p_{k} \end{pmatrix}_{2n\times 1}, \qquad\qquad\qquad
			\lambda^{2}_{k} \equiv \begin{pmatrix} \rho_{k} \\ \theta_{k} \end{pmatrix}_{(m+l)\times 1},\\[1ex]
			\tilde{A}_{k} &\equiv \begin{pmatrix} A_{k} & 0 \\ 0 & A_{k} \end{pmatrix}_{2n\times 2n},
			\qquad\quad\tilde{D}_{k} \equiv \begin{pmatrix} D_{k} & 0 \\ 0 & D_{k} \end{pmatrix}_{2n\times 2n}, \\[1ex]
			f_{k} &\equiv \begin{pmatrix} f^{1}_{k} \\ f^{2}_{k} \end{pmatrix}_{2n\times 1}, \quad
			g_{k} \equiv \begin{pmatrix} g^{1}_{k} & 0 \\ 0 & g^{2}_{k} \end{pmatrix}_{2n\times 2n},
			h_{k} \equiv \begin{pmatrix} h^{1}_{k} & 0 \\ 0 & h^{2}_{k} \end{pmatrix}_{2n\times 2n}.
		\end{aligned}
		\right.
	\end{equation}
}
				\section*{\midsmall{B.~Compact Representation for Non-Symmetric Riccati Equation}} \normalsize{For $T_{k+1}\in\mathbb{R}^{2n},\phi_{k}\in\mathbb{R}^{2n}$, the following symbols are used to represent the non-symmetric Riccati equation:}
					\small{\begin{equation}\nonumber
						\left\{\begin{array}{lll}
							\begin{split}
								\Delta\left(T_{k+1}\right)&:=\Lambda^{5}_{k}+\tilde{A}_{k}^{\top}\mathbb{E}[T_{k+1} \mid \mathcal{F}_{k-1}]A_{k}\\&~~~~+\tilde{A}_{k}^{\top}\mathbb{E}[T_{k+1}\omega_{k} \mid \mathcal{F}_{k-1}]D_{k}\\&~~~~+\tilde{D}_{k}^{\top}\mathbb{E}[T_{k+1}\omega_{k} \mid \mathcal{F}_{k-1}]A_{k}\\&~~~~+\tilde{D}_{k}^{\top}\mathbb{E}[T_{k+1}\omega_{k}^{2} \mid \mathcal{F}_{k-1}]D_{k},
								\\\\
								\mathscr{L}\left(T_{k+1}\right) &:= \Lambda^{6}_{k}+\Lambda^{2\top}_{k}\mathbb{E}[T_{k+1} \mid \mathcal{F}_{k-1}]\tilde{A}_{k}\\&~~~~+\Lambda^{4\top}_{k}\mathbb{E}[T_{k+1}\omega_{k} \mid \mathcal{F}_{k-1}]\tilde{A}_{k}\\&~~~~+\Lambda^{2\top}_{k}\mathbb{E}[T_{k+1}\omega_{k} \mid \mathcal{F}_{k-1}]\tilde{D}_{k}\\&~~~~+\Lambda^{4\top}_{k}\mathbb{E}[T_{k+1}\omega_{k}^{2} \mid \mathcal{F}_{k-1}]\tilde{D}_{k}
								,
							\end{split}
						\end{array}
						\right.
				\end{equation}}
		\small{\begin{equation}\label{eq:7.2}
			\left\{\begin{array}{lll}
				\begin{split}
					\Upsilon\left(T_{k+1}\right)&:=\Lambda^{7}_{k}+\Lambda^{1\top}_{k}\mathbb{E}[T_{k+1} \mid \mathcal{F}_{k-1}]\Lambda^{2}_{k}\\&~~~~+\Lambda^{1\top}_{k}\mathbb{E}[T_{k+1}\omega_{k} \mid \mathcal{F}_{k-1}]\Lambda^{4}_{k}\\&~~~~+\Lambda^{3\top}_{k}\mathbb{E}[T_{k+1}\omega_{k} \mid \mathcal{F}_{k-1}]\Lambda^{2}_{k}\\&~~~~+\Lambda^{3\top}_{k}\mathbb{E}[T_{k+1}\omega^{2}_{k} \mid \mathcal{F}_{k-1}]\Lambda^{4}_{k},
					\\\\
					\Gamma\left(T_{k+1}\right)&:=\Lambda^{6}_{k}\boldsymbol{I}_{n}+\Lambda^{1\top}_{k}\mathbb{E}[T_{k+1} \mid \mathcal{F}_{k-1}]A_{k}\\&~~~~+\Lambda^{3\top}_{k}\mathbb{E}[T_{k+1}\omega_{k} \mid \mathcal{F}_{k-1}]A_{k}\\&~~~~+\Lambda^{1\top}_{k}\mathbb{E}[T_{k+1}\omega_{k} \mid \mathcal{F}_{k-1}]D_{k}\\&~~~~+\Lambda^{3\top}_{k}\mathbb{E}[T_{k+1}\omega^{2}_{k} \mid \mathcal{F}_{k-1}]D_{k},
					\\\\
					\Theta\left(T_{k+1},\phi_{k+1}\right) &:= \lambda^{1}_{k}+\tilde{A}^{\top}_{k}\big(\mathbb{E}[T_{k+1}\mid \mathcal{F}_{k-1}]b_{k}\\&~~~~+\mathbb{E}[T_{k+1}\omega_{k}\mid \mathcal{F}_{k-1}]\sigma_{k}+\mathbb{E}[\phi_{k+1}\mid \mathcal{F}_{k-1}]\big)\\&~~~~+\tilde{D}^{\top}_{k}\big(\mathbb{E}[T_{k+1}\omega_{k}\mid \mathcal{F}_{k-1}]b_{k}\\&~~~~+\mathbb{E}[T_{k+1}\omega_{k}^{2}\mid \mathcal{F}_{k-1}]\sigma_{k}+\mathbb{E}[\phi_{k+1}\omega_{k}\mid \mathcal{F}_{k-1}]\big),
					\\\\
					\Phi\left(T_{k+1},\phi_{k+1}\right)&:=\lambda^{2}_{k}+\Lambda^{1\top}_{k}\big(\mathbb{E}[T_{k+1}\mid \mathcal{F}_{k-1}]b_{k}\\&~~~~+\mathbb{E}[T_{k+1}\omega_{k}\mid \mathcal{F}_{k-1}]\sigma_{k}+\mathbb{E}[\phi_{k+1}\mid \mathcal{F}_{k-1}]\big)\\&~~~~+\Lambda^{3\top}_{k}\big(\mathbb{E}[T_{k+1}\omega_{k}\mid \mathcal{F}_{k-1}]b_{k}\\&~~~~+\mathbb{E}[T_{k+1}\omega_{k}^{2}\mid \mathcal{F}_{k-1}]\sigma_{k}+\mathbb{E}[\phi_{k+1}\omega_{k}\mid \mathcal{F}_{k-1}]\big).
				\end{split}
			\end{array}
			\right.
		\end{equation}}

\end{document}